\def\A{\mathcal{A}}
\def\F{\mathcal F}
\def\C{\mathcal C}
\def\H{\mathcal H}
\def\L{\mathcal L}
\def\P{\mathcal P}
\def\X{\mathcal X}
\def\amslatex{$\mathcal{A}\kern-.1667em\lower.5ex\hbox{$\mathcal{M}$}\kern-.125em\mathcal{S}$-\LaTeX}
\newtheorem{set}{set}[section]
\newtheorem{Corollary}[set]{Corollary}
\newtheorem{Definition}[set]{Definition}
\newtheorem{Remark}[set]{Remark}
\newtheorem{Theorem}[set]{Theorem}
\newcommand{\define}{\mathrel{\hbox{$\equiv$\hskip -.90em \lower .47ex \hbox{$\leftharpoondown$}}}}
\newcommand{\enifed}{\mathrel{\hbox{$\equiv$\hskip -.90em \lower .47ex \hbox{$\rightharpoondown$}}}}
\begin{document}
\title {\bf   Compound Bi-free  Poisson Distributions}
\author{Mingchu Gao}
\address{School of Mathematics and Information Science,
Baoji University of Arts and Sciences,
Baoji, Shaanxi 721013,
China; and
Department of Mathematics,
Louisiana College,
Pineville, LA 71359, USA}
\email{mingchu.gao@lacollege.edu}

\date{}
\begin{abstract}In this paper, we study  compound bi-free Poisson distributions for {\sl two-faced families of random variables}. We prove a Poisson limit theorem for compound bi-free Poisson distributions. Furthermore, a bi-free infinitely divisible distribution for a two-faced family of self-adjoint random variables can be realized as the limit of  a sequence of compound bi-free Poisson distributions of two-faced families of self-adjoint random variables. If a compound bi-free Poisson distribution is determined by a positive number and the distribution of a two faced family of finitely many random variables, which has an almost sure random matrix model, and the left random variables commute with the right random variables in the two-faced family, then we can construct a random bi-matrix model for the compound bi-free Poisson distribution. If a compound bi-free Poisson distribution is determined by a positive number and the distribution of a commutative pair of random variables, we can construct an asymptotic bi-matrix model  with entries of creation and annihilation operators for the compound bi-free Poisson distribution.
\end{abstract}
\maketitle
{\bf AMS Mathematics Subject Classification (2010)} 46L54.

{\bf Key words and phrases} Compound Bi-free Poisson Distributions, Bi-free Infinitely Divisible Distributions, Random Bi-matrix Models, Bi-matrix Models with  Fock Space Entries.
\section*{Introduction}
Studying free probability analogues to classical probability items has been a main topic in free probability since the inception of the theory by Voiculescu in \cite{DV3}.   The most popular and thoroughly studied item is  semicircular distributions, a free probability analogue to classical Gaussian distributions. Free Poisson distributions hold a status of the most popular next to semicircular distributions in free probability. Multidimensional semicircular distributions were first treated in \cite{RS1}. Finite dimensional multi-variable free Poisson distributions were defined and studied in \cite{RS}. More general infinitely dimensional multi-variable compound free Poisson distributions were studied recently in \cite{AG}.

Voiculescu's seminal work in \cite{DV} started a new research field in free probability, bi-free probability. Generalizing the ideas and results in free probability to this new setting has been a main theme in the quick development of bi-free probability. For example, in \cite{DV}, Voiculescu determined bi-free central limit distributions, a bi-free analogue of semicircular distributions. Voiculescu \cite{DV4} constructed a bi-free partial $R$-transform as an analogue of his $R$-transform in \cite{DV5}. Voiculescu \cite{DV2} developed a bi-free partial $S$-transform. On the combinatorial side, Mastnak and Nica  \cite{MN} introduced a collection of partitions for bi-free pairs of faces that was postulated to be analogous to the role non-crossing partitions played in free probability. In \cite{CNS1} the postulate of Mastnak and Nica was confirmed to be true. Subsequently, \cite{CNS2} generalized such notions to the operator-valued setting. In addition, the notion of bi-free infinitely (additive) divisible distributions for commutative pairs of  self-adjoint random variables was developed in \cite{GHM}, which was generalized to the case of (not necessarily commutative) pairs of  random variables in \cite{MG}. Gu, Huang, and Mingo \cite{GHM} discovered compound bi-free Poisson distributions for commutative pairs of random variables in terms of their bi-free cumulants (Example 3.13 (3) in \cite{GHM}).
In this paper, inspired by the work in \cite{RS} and \cite{AG}, we define and study  compound  bi-free Poisson distributions for  {\sl two-faced families of random variables}.

Voiculescu \cite{DV} defined bi-free central limit distributions for two-faced families of random variables in terms of their bi-free cumulants (Definition 7.4 in \cite{DV}). A similar method was adopted in defining (multi-variable) free Poisson distributions in \cite{NS}, \cite{RS} and \cite{AG}. Based on the same philosophy,  we define {\sl compound bi-free Poisson distributions for two-faced families of random variables} in terms of their bi-free cumulants.  A compound bi-free  Poisson distribution  can be realized as the Poisson limit distribution of a sequence of two-faced families of random variables.   Like in free probability,  a bi-free infinitely divisible distribution, as a positive linear functional on the $*$-algebra of polynomials, could be approximated in distribution by  compound bi-free Poisson distributions of self-adjoint random variables.

Discovering of  the connections between free probability and random matrices initially done by Voiculescu in \cite {DV1} is a milestone in the development of free probability, which  led free probability out of the umbrella of operator algebras and into a wider field of research. The development of the connection theory between free probability and random matrices has made free probability to be a powerful tool in random matrices (\cite{AGZ}). Inspired by the corresponding work in free probability, Skoufranis \cite{PS} constructed bi-matrix models for bi-free central limit distributions and a special kind of bi-free Poisson distributions. Precisely, Skoufranis constructed bi-matrix models of random matrices, and of matrices with entries of creation and annihilation operators,  for bi-free central limit distributions (Sections 4 and 5 in \cite{PS}). Skoufranis also constructed a bi-matrix model of random matrices for a bi-free Poisson distribution determined by numbers $\lambda>0, \alpha, \beta \in \mathbb{R}$, i. e., a distribution $\mu$ characterized by $\kappa_\chi(\mu)=\lambda \alpha^{|\chi^{-1}(\{l\})|}\beta^{|\chi^{-1}(\{r\})|}$, for $n\in \mathbb{N}$ and $\chi:\{1, 2, \cdots, n\}\rightarrow \{l, r\}$ (Example 4.15 in \cite{PS}). In this paper, we construct bi-matrix models for a compound bi-free Poisson distribution determined by $\lambda>0$ and the distribution $\mu_a$ of  a bipartite two-faced family $a=((a_{1, l}, \cdots, a_{N, l}), (a_{1, r}, \cdots, a_{M, r}))$, where the tuple $\{a_{1, l}, \cdots, a_{N, l}, a_{1, r}, \cdots, a_{M, r}\}$ has an almost sure random matrix model.

This paper is organized as follows. Besides this Introduction, there are five sections in this paper. In Section 1, we recall some essential combinatorial aspects of bi-free probability, operator-valued bi-free probability, and the general construction of bi-matrix models in \cite{PS}. We give the definition and a Poisson limit theorem for a two-faced family of random variables to have a compound bi-free Poisson distribution in Section 2 (Definition 2.1 and Theorem 2.3). Bi-free infinite divisibility of  distributions of  two-faced families of self-adjoint random variables is defined and characterized by existence of a bi-free additive convolution semigroup associated with the distribution (Definition 3.1 and Theorem 3.4).  Furthermore, such a distribution can be approximated by compound bi-free Poisson distributions of two-faced families of self-adjoint random variables (Theorem 3.5). Historically, Marchenko-Pastur's work in \cite{MP} implies that Wishart matrices with appropriate normalization and scaling form a random matrix model for free Poisson distributions (see also Theorem 4.1.9 in \cite{HP}). F. Hiai and D. Pets \cite{HP} provided a random matrix model for compound free Poisson distribution $P(\lambda, \nu)$ (Proposition 4.4.11 in \cite{HP}), where $\lambda>0$ and $\nu$ is a probability measure on $\mathbb{R}$ with a compact support. In Section 4, we construct a random matrix model for a multi-dimensional compound free Poisson distribution $P(\lambda, \nu_a)$ (Theorem 4.2), where $\lambda>0$, $\nu_a$ is the distribution of $a=(a_1, \cdots, a_N)$, if the tuple $\{a_1, \cdots, a_N\}$ has an almost sure random matrix model. Especially, $P(\lambda, \nu_a)$ has a random matrix model, if the tuple $\{a_1,..., a_N\}$ is classically independent (Corollary 4.3). Applying this result to two-faced families of random variables, we get a random bi-matrix model for a compound bi-free Poisson distribution determined by $\lambda>0$ and $\nu_a$, $a=((a_{1,l},\cdots, a_{N,l}), (a_{1, r}, \cdots, a_{M, r}))$, if the tuple has an almost sure random matrix model, and the left random variables commute with the right random variables in the tuple (Theorem 4.4). P. Skoufranis \cite{PS} constructed bi-matrix models with entries of creation and annihilation operators for bi-free central limit distributions (Theorem 5.1 and Remark 5.2 in \cite{PS}).
Using the operator model in \cite{MN}, we  construct an asymptotic bi-matrix model with entries of creation and annihilation operators for a compound bi-free Poisson distribution determined by $\lambda>0$ and the distribution of a commutative pair $(a_1, a_2)$ (Theorem 5.1).
\section{Preliminaries}
In this section we will summarize some essential combinatorial aspects of bi-free probability, operator-valued bi-free probability,  and  the general construction of bi-matrix models in \cite{PS}.  The reader is referred to \cite{DV}, \cite{CNS1}, \cite{CNS2}, and \cite{PS1} for more details on bi-free probability, and to \cite{NS} and \cite{VDN} for basics on free probability.
\subsection{Combinatorics of Bi-free Probability}
 Let $\chi:\{1, 2, \cdots, n\}\rightarrow  \{l,r\}$. Let's record explicitly where are the occurrences of $l$ and $r$ in $\chi$. $\chi^{-1}(l)=\{i_1<i_2<\cdots <i_p\}$ and $\chi^{-1}(r)=\{i_{p+1}>i_{p+2}>\cdots >i_n\}$. Then we define a permutation $s_\chi$ on $\{1, 2, \cdots, n\}: s_\chi(k)=i_k$, for $k=1, 2,\cdots, n$.
 For a subset $V=\{i_1< i_2< \cdots< i_k\}$ of the set $\{1, 2, \cdots, n\}$, $a_1\cdots, a_n\in \A$, define $$\varphi_V(a_1, \cdots, a_n)=\varphi(a_{i_1}a_{i_2}\cdots a_{i_k}).$$  Let $\P(n)$ be the set of all partitions of $\{1, 2, \cdots, n\}$. For a partition $\pi=\{V_1, V_2, \cdots, V_d\}\in \P(n)$, we define $$\varphi_\pi(a_1, \cdots, a_n):=\prod_{V\in \pi}\varphi_V(a_1, \cdots, a_n).$$  Define $BNC(n,\chi)=\{s_\chi\circ \pi:\pi\in NC(n)\}$, where $NC(n)$ is the set of all non-crossing partitions of $\{1, 2, \cdots, n\}$ (Lecture 9 in \cite{NS}).  A $\sigma\in BNC(n,\chi)$ is called a {\sl bi-non-crossing} partition of $\{1, 2, \cdots, n\}$.  Let $(\A,\varphi)$ be a non-commutative probability space. The bi-free cumulants $(\kappa_\chi:\A^n\rightarrow \mathbb{C})_{n\ge 1, \chi:\{1, 2, \cdots, n\}\rightarrow \{l,r\}}$ of $(\A,\varphi)$ are defined by
 $$\kappa_\chi (a_1, \cdots, a_n)=\sum_{\pi\in BNC(n,\chi)}\varphi_{\pi}(a_1, \cdots, a_n)\mu_n(s^{-1}_\chi\circ\pi, 1_n),\eqno (1.1)$$ for $n\ge 1, \chi:\{1,2,\cdots, n\}\rightarrow \{l,r\}, a_1, \cdots, a_n\in A$, where $\mu_n$ is the Mobius function on $NC(n)$ (Lecture 10 in \cite{NS}). For a subset $V=\{i_1, i_2, \cdots, i_k\}\subseteq \{1, 2, \cdots, n\}$, let $\chi_V$ be the restriction of $\chi$ on $V$. We define $\kappa_{\chi, V}(a_1, a_2, \cdots, a_n)=\kappa_{\chi_V}(a_{i_1}, a_{i_2}, \cdots, a_{i_k})$.
 For a partition $\pi=\{V_1, V_2, \cdots, V_k\} \in BNC(n,\chi)$, we define $\kappa_{\chi, \pi}(a_1, \cdots, a_n)=\prod_{V\in \pi}\kappa_{\chi, V}(a_1, a_2, \cdots, a_n)$.  Then the bi-free cumulant appeared in $(1.1)$ is $\kappa_{\chi, 1_n}(a_1, \cdots, a_n)$. The bi-free cumulants are determined by the equation $$\varphi (a_1 a_2 \cdots a_n)=\sum_{\pi\in BNC(n, \chi)}\kappa_{\chi,\pi}(a_1, a_2, \cdots, a_n), \forall a_1, \cdots, a_n\in \A,   \eqno (1.2)$$ for a $\chi:\{1, 2, \cdots, n\}\rightarrow \{l,r\}$.

 Charlesworth, Nelson, and Skoufranis \cite{CNS1} proved that
  $$z'=((z'_i)_{i\in I}, (z'_j)_{j\in J}), z''=((z''_i)_{i\in I}, (z''_j)_{j\in J})$$ in a non-commutative probability space $(\A,\varphi)$ are bi-free if and only if
 $$\kappa_\chi(z_{\alpha(1)}^{\epsilon_1},z_{\alpha(2)}^{\epsilon_2}, \cdots, z_{\alpha(n)}^{\epsilon_n})=0, \eqno (1.3)$$ whenever $\alpha:\{1,2,\cdots, n\}\rightarrow I\bigsqcup J$, $\chi:\{1, 2, \cdots, n\}\rightarrow \{l,r\}$ such that $\alpha^{-1}(I)=\chi ^{-1}(\{l\})$,   $\epsilon:\{1,2, \cdots, n\}\rightarrow \{',''\}$ is not constant, and $n\ge 2$ (Theorem 4.3.1 in \cite{CNS2}).

 Let $\mu$ and $\nu$ be distributions of the pairs $(a_l, a_r)$ and $(b_l, b_r)$, respectively. We call the distribution $\mu\boxplus\boxplus\nu$ of $(a_l+b_l, a_r+b_r)$ the bi-free additive convolution of $\mu$ and $\nu$, if $(a_l, a_r)$ and $(b_l, b_r)$ are bi-free.
\subsection{Structures of Operator-valued Bi-freeness}
Let $B$ be a unital algebra.
 A $B$-$B$-{\sl non-commutative probability space} is a triple $(\A,E,\varepsilon)$ where $\A$ is a unital algebra, $\varepsilon: B\otimes B^{op}\rightarrow \A$ is a unital homomorphism such that $\varepsilon|_{B\otimes 1_B}$ and $\varepsilon|_{1_B\otimes B}$ are injective, and $E:\A\rightarrow B$ is a linear map such that $E(\varepsilon(b_1\otimes b_2)a)=b_1E(a)b_2$ and $E(a\varepsilon(b\otimes 1_B))=E(a\varepsilon(1_B\otimes b))$. Let $L_b=\varepsilon(b\otimes 1_B)$ and $R_b=\varepsilon(1_B\otimes b)$. The unital subalgebras $$\A_l=\{a\in \A: aR_b=R_ba, \forall b\in B\}, \A_r=\{a\in \A: L_ba=aL_b,\forall b\in B\}$$ are called the {\sl left } and {\sl right} algebras of $\A$, respectively.
The following we give a canonical example of $B$-$B$-non-commutative probability spaces.

A {\sl $B$-$B$-bi-module with a specified $B$-vector state} is a triple $(\mathcal{X}, \mathcal{X}^0, p)$ where $\mathcal{X}=B\oplus \mathcal{X}^0$, a direct sum of $B$-$B$ bi-modules and $p:\mathcal{X}\rightarrow B$, $p(b\oplus\eta)=b$. Let $\L(\X)$ denote the set of linear operators on $\mathcal{X}$. For $b\in B$, define $L_b, R_b\in \L(\X)$ by $$L_b(x)=bx, R_b(x)=xb, \forall x\in \X.$$ Similarly, we can define left and right algebras as follows $$\L_l(\X):=\{A\in \L(\X): AR_b=R_bA, \forall b\in B\}, \L_r(\X):=\{A\in \L(\X):AL_b=L_bA, \forall b\in B\}. $$ Given a $B$-$B$-bi-module with a specified $B$-vector state $\{\X, \X^0, p\}$, the expectation $E_{\L(\X)}$ of $\L(\X)$ onto $B$ is defined by $$E_{\L(\X)}(A)=p(A1_B), \forall A\in \L(\X).$$ Define $\varepsilon: B\otimes B^{op}\rightarrow \A$, $\varepsilon(b_1\otimes b_2)=L_{b_1}R_{b_2}$. Then $(\L(\X), E_{\L(\X)}, \varepsilon)$ is a (concrete) $B$-$B$-non-commutative probability space. Moreover, Theorem 3.2.4 in \cite{CNS2} demonstrated that every abstract $B$-$B$-non-commutative probability space can be represented inside a concrete $B$-$B$-non-commutative probability space.
\subsection{Bi-Matrix Models}
Let $\X$ be a vector space over $\mathbb{C}$ with $\X=\mathbb{C}\xi\oplus \X^0$ and $p:\X\rightarrow \mathbb{C}, p(\lambda\xi+\eta)=\lambda$. We call $(\X, \X^0, \xi, p)$ a pointed vector space. For $N\in \mathbb{N}$, consider $M_N(\mathbb{C})$-$M_N(\mathbb{C})$ bi-modular actions on $\X_N:=M_N(\X)$, $$[a_{i,j}]\cdot[\eta_{i,j}]=[\sum_{k=1}^Na_{i,k}\eta_{k,j}], [\eta_{i,j}]\cdot[a_{i,j}]=[\sum_{k=1}^Na_{k,j}\eta_{i,k}],$$ for all $[a_{i,j}]\in M_N(\mathbb{C})$ and $[\eta_{i,j}]\in \X_N$. Then $\X_N$ becomes an $M_N(\mathbb{C})$-$M_N(\mathbb{C})$-bi-module with a specified $M_N(\mathbb{C})$-vector state via $$\X_N=M_N(\mathbb{C}\xi)\oplus M_N(\X^0),$$
and a linear map $p_{\X_N}:\X_N\rightarrow M_N(\mathbb{C})$ defined by $p_{\X_N}([\eta_{i,j}])=[p(\eta_{i,j})]$, which  is called the $M_N(\mathbb{C})$-$M_N(\mathbb{C})$ -bi-module associated with $(\X,p)$ and $(\L(\X_N), E_{\L(\X_N)}, \varepsilon)$ is called the $M_N(\mathbb{C})$-$M_N(\mathbb{C})$-non-commutative probability space associated with $(\X,p)$. The expectation $E_{\L(\X_N)}:\L(\X_N)\rightarrow M_N(\mathbb{C})$ has the form $E_{\L(\X_N)}(A)=p_{\X_N}(A1_{N,\xi})$, where $A\in \L(\X_N)$ and $1_{M,\xi}$ is the diagonal matrix $diag(\xi, \xi, \cdots, \xi)$.

To consider bi-matrix models, we define two homomorphisms $L: M_N(\L(\X))\rightarrow \L(\X_N)$, and $R: M_N(\L(\X)^{op})^{op}\rightarrow \L(\X_N)$, $$L([T_{i,j}])[\eta_{i,j}]=[\sum_{k=1}^NT_{i,k}(\eta_{k,j})], R([T_{i,j}])[\eta_{i,j}]=[\sum_{k=1}^NT_{k,j}\eta_{i,k}],$$ for $[\eta_{i,j}]\in \X_N$ and $[T_{i,j}]\in M_N(\L(\X))$. $L([T_{i,j}])$ and $R([T_{i,j}])$ are called left and right matrices of $\L(\X)$, respectively.

Let $\A=(L^{\infty-}(\Omega, P), E)$, where $L^{\infty-}(\Omega, P)=\cap_{p\ge 1}L^p(\Omega, P)$, $(\Omega, P)$ is a probability space, and $E: f\mapsto \int_\Omega f(d)dP(t)$ is the expectation. Skoufranis \cite{PS} introduced {\sl random pairs of matrices} as follows. For $N\in \mathbb{N}$, an $N\times N$ random pair of matrices on $L^{\infty-}(\Omega, P)$ is a pair $(X_l, X_r)$, where $X_l$ is a left matrix and $X_r$ is a right matrix with entries from $\A\subset \L(L^2(\Omega, P))$ (Definition 4.5 in \cite{PS}). We generalize the concept to two-faced families.
\begin{Definition}
For $N\in \mathbb{N}$, an $N\times N$ random two-faced family of matrices on $L^{\infty-}(\Omega, P)$ is a two-faced family $((X_{i})_{i\in I}, (X_{j})_{j\in J})$ where $X_i, i\in I,$ are left matrices and $X_j, j\in J,$ are right matrices with entries from $L^{\infty-}(\Omega, P)$.
\end{Definition}

{\bf Acknowledgements} The author would like to thank the referee(s) for carefully reading the original version of this paper, pointing out some mistakes and typos in it,  and providing some suggestions to revise it.
\section{ The Definition and A Poisson Limit Theorem}

In this section, we give the definition for a two-faced family to have a  bi-free compound Poisson distribution.    Furthermore, a bi-free compound Poisson distribution can be realized via a bi-free Poisson limit theorem.

\begin{Definition} Let $I$ and $J$ be two disjoint index sets, and $((z_i)_{i\in I}, (z_j)_{j\in J})$ be a two-faced family of random variables in a non-commutative probability space $(\A,\varphi)$. We say that $((z_i)_{i\in I}, (z_j)_{j\in J})$ has a compound bi-free  Poisson distribution, if there exist a real number $\lambda>0$ and a two-faced family $((a_i)_{i\in I}, (a_j)_{j\in J})$ of random variables in $(\A,\varphi)$ such that, for every $n\in \mathbb{N}$, and a map $$\chi:\{1, 2, \cdots, n\}\rightarrow I\bigsqcup J,$$ we have  $$\kappa_\chi(z_{\chi(1)}, \cdots, z_{\chi(n)})=\lambda \varphi (a_{\chi(1)}a_{\chi(2)}\cdots a_{\chi(n)}).$$ We call the distribution of $((z_i)_{i\in I}, (z_j)_{j\in J})$ a compound bi-free  Poisson distribution determined by $\lambda$ and $\mu_a$, the distribution of $a=((a_i)_{i\in I}, (a_{j})_{j\in J})$.
\end{Definition}
\begin{Remark} The above definition covers the following well-known cases.

\begin{enumerate}
 \item Let $I=\{l\}, J=\{r\}$, $[z_l, z_r]=[a_l, a_r]=0$. Then $$\kappa_{\chi}(z_l, z_r)=\lambda \varphi(a_l^{|\chi^{-1}(\{l\})|}a_r^{|\chi^{-1}(\{r\})|}), $$ which defines a compound  bi-free Poisson distribution for a commutative pair of random variables (Example 3.13 (3) of \cite{GHM}).
 \item If $\chi:\{1,2,\cdots, n\}\rightarrow I$, we get $$\kappa(z_{\chi(1)}, \cdots, z_{\chi(n)})=\lambda \nu (X_{\chi(1)}X_{\chi(2)}\cdots X_{\chi(n)}),$$ where $\nu:\mathbb{C}(X_i:i\in I)\rightarrow \mathbb{C}$ defined by $\nu(X_{\chi(1)}X_{\chi(2)}\cdots X_{\chi(n)})=\varphi (a_{\chi(1)}a_{\chi(2)}\cdots a_{\chi(n)})$ is the distribution of $\{a_i:i\in I\}$ in $(\A, \varphi)$. In this case, we obtain the scalar-valued compound (free) Poisson distributions defined in 4.4.1 of \cite{RS} with parameter $\lambda>0$. It follows that Definition 2.1 provides a bi-free analogue of multi-variable compound free Poisson distributions.
\end{enumerate}
\end{Remark}
Corollary 2.4 in \cite{MG} gives the following Poisson limit theorem for compound bi-free  Poisson distributions.
\begin{Theorem} Let $((a_i)_{i\in I}, (a_j)_{j\in J})$ be a two-faced family of random variables in $(\A, \varphi)$, and $\lambda >0$. For each $N>\lambda$, let $(\C_N, \phi_N)$ be a $C^*$-probability space, and $p_N\in \mathcal{C}_N$ be a projection with $\phi_N(p_N)=\frac{\lambda}{N}$. Let $a_N= ((a_i\otimes p_{N})_{i\in I}, (a_j\otimes p_{N})_{j\in J})$ be a two-faced family of random variables in $\A_N:=\A\otimes \C_N$ with $\varphi_N:=\varphi \otimes \phi_N$. Let $\{(a_{N, i, m})_{i\in I}, (a_{N,j,m})_{j\in J}):m=1, 2, \cdots, N\}$ be a bi-free sequence of $N$ identically distributed two-faced families of random variables in $(\A_N, \varphi_N)$ such that each of the two-faced families has the same distribution as that of $a_N$. Let,  finally, $S_{N,k}=\sum_{m=1}^Na_{N,k,m}$, for $k\in I\bigsqcup J$,  $S_N=(S_{N,i})_{i\in I},(S_{N,j})_{j\in J})$. Then $S_N$ converges in distribution to the compound bi-free  Poisson distribution determined by $((a_i)_{i\in I}, (a_j)_{j\in J})$ and $\lambda$, $N\rightarrow \infty$.
\end{Theorem}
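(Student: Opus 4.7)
The plan is to prove convergence in distribution by establishing that, for each $n\ge 1$ and each $\chi:\{1,\ldots,n\}\to I\sqcup J$, the bi-free cumulant $\kappa_\chi(S_{N,\chi(1)},\ldots,S_{N,\chi(n)})$ converges to $\lambda\,\varphi(a_{\chi(1)} a_{\chi(2)}\cdots a_{\chi(n)})$. Since the moment--cumulant relation (1.2) expresses every joint moment as a finite polynomial in bi-free cumulants, convergence of all cumulants transfers to convergence of all mixed moments of $S_N$, which is exactly convergence in distribution to the compound bi-free Poisson determined by $\lambda$ and $\mu_a$.

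The first step is a cumulant reduction. Expanding by multilinearity gives a sum over $(m_1,\ldots,m_n)\in\{1,\ldots,N\}^n$ of cumulants $\kappa_\chi(a_{N,\chi(1),m_1},\ldots,a_{N,\chi(n),m_n})$. The bi-freeness of the sequence of families in $(\A_N,\varphi_N)$, together with the vanishing criterion (1.3) (extended from pairs to arbitrary bi-free collections in the standard way), forces every term with non-constant $(m_1,\ldots,m_n)$ to vanish. The $N$ surviving diagonal terms coincide by identical distribution, so
$$\kappa_\chi(S_{N,\chi(1)},\ldots,S_{N,\chi(n)}) = N\,\kappa_\chi^{(N)}(a_{N,\chi(1)},\ldots,a_{N,\chi(n)}),$$
where the cumulant on the right is computed in $(\A_N,\varphi_N)$.

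The second step evaluates the single-copy cumulant via the moment--cumulant inversion (1.1). For each $\pi\in BNC(n,\chi)$ and each block $V\in\pi$, the moment $\varphi_{N,V}$ factors through $\varphi\otimes\phi_N$; since $p_N$ is a projection, $\phi_N(p_N^{|V|})=\lambda/N$, so $\varphi_{N,\pi}(a_{N,\chi(1)},\ldots,a_{N,\chi(n)}) = (\lambda/N)^{|\pi|}\,\varphi_\pi(a_{\chi(1)},\ldots,a_{\chi(n)})$. Substituting back yields
$$\kappa_\chi(S_{N,\chi(1)},\ldots,S_{N,\chi(n)}) = \sum_{\pi\in BNC(n,\chi)}\lambda^{|\pi|}\, N^{1-|\pi|}\,\varphi_\pi(a_{\chi(1)},\ldots,a_{\chi(n)})\,\mu_n(s_\chi^{-1}\circ\pi,\,1_n).$$
As $N\to\infty$, the factor $N^{1-|\pi|}$ annihilates every contribution with $|\pi|\ge 2$, leaving only $\pi=1_n$; since $s_\chi$ is a permutation, $s_\chi^{-1}\circ 1_n=1_n$ and $\mu_n(1_n,1_n)=1$, producing the desired limit $\lambda\,\varphi(a_{\chi(1)} a_{\chi(2)}\cdots a_{\chi(n)})$.

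The main obstacle is the cumulant-vanishing step: equation (1.3) is stated for a pair of bi-free two-faced families, whereas here one needs the analogous vanishing across a bi-free sequence of $N$ families. This is a routine extension (by iterating the two-family case, or directly from the combinatorial definition of joint bi-freeness), but it must be articulated carefully so that the index $m$ labeling copies plays the role of the non-constant label in (1.3). Once that vanishing is in hand, the remainder is mechanical bookkeeping of the $N^{1-|\pi|}$ asymptotic, and the identification of $\pi=1_n$ as the sole surviving partition is immediate.
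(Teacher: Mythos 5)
Your proof is correct and follows essentially the same route as the paper: the paper simply invokes Corollary 2.4 of \cite{MG}, which is precisely the statement that the limiting cumulants of $S_N$ equal $\lim_{N\to\infty} N\varphi_N(a_{N,\chi(1)}\cdots a_{N,\chi(n)})$ --- i.e., the multilinearity expansion, the vanishing of mixed cumulants across the $N$ bi-free copies, and the $N^{1-|\pi|}$ asymptotics that you carry out by hand --- and then performs only the final computation $\varphi_N\bigl((a_{\chi(1)}\otimes p_N)\cdots(a_{\chi(n)}\otimes p_N)\bigr)=\tfrac{\lambda}{N}\varphi(a_{\chi(1)}\cdots a_{\chi(n)})$, which is your projection step. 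The extension of the vanishing criterion (1.3) from two families to a bi-free sequence of $N$ families that you flag is indeed the standard one (the cumulant characterization in \cite{CNS2} is stated for arbitrary index sets), so there is no gap.
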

\begin{proof} Let $n\in \mathbb{N}$, and $\chi:\{1, 2, \cdots, n\}\rightarrow I\bigsqcup J$. By Corollary 2.4 in \cite{MG}, the limit distribution of $S_N$, as $N\rightarrow \infty$, is characterized by
\begin{align*}
\kappa_\chi(b_{\chi(1)}, \cdots, b_{\chi(n)})&=\lim_{N\rightarrow \infty}N\varphi_N((a_{\chi(1)}\otimes p_{N})\cdots (a_{\chi(n)}\otimes p_{N}))\\
=&\lim_{N\rightarrow \infty}N\cdot \frac{\lambda}{N}\varphi(a_{\chi(1)}a_{\chi(2)}\cdots a_{\chi(n)})\\
=&\lambda \varphi(a_{\chi(1)}a_{\chi(2)}\cdots a_{\chi(n)}),
\end{align*}where the two-faced family $((b_i)_{i\in I}, (b_j)_{j\in J})$ has the limit distribution.
\end{proof}
In the $C^*$-probability space case, when taking the positivity of the state $\varphi$ on $\A$ into account, we can get a conclusion that the bi-free compound Poisson distribution $P_{\lambda, \mu_a}$ is  a positive linear functional on the polynomial algebra.
\begin{Corollary} Let $(\A, \varphi)$ be a $C^*$-probability space, and $I$ and $J$ be disjoint index sets. Let $\lambda>0$ be a positive number and $a:=((a_i)_{i\in I}, (a_j)_{j\in J})$ be a two-faced family of self-adjoint random variables in $\A$. Then the bi-free compound Poisson distribution $P_{\lambda, \mu_a}:\mathbb{C}\langle X_k: k\in I\bigsqcup J\rangle \rightarrow \mathbb{C}$ is positive, where $\mathbb{C}\langle X_k: k\in I\bigsqcup J\rangle$ is a unital $*$-algebra with $X_k=X^*_k$, for $k\in I\bigsqcup J$.
\end{Corollary}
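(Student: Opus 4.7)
The plan is to realize $P_{\lambda,\mu_a}$ as a pointwise limit of a sequence of positive functionals, using Theorem 2.3, and then invoke the fact that pointwise limits of positive linear functionals on a unital $*$-algebra are positive.

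First, I would upgrade the construction in Theorem 2.3 so that every object involved lives inside a $C^*$-probability space and the self-adjointness is preserved. Concretely, for each $N>\lambda$ choose a $C^*$-probability space $(\C_N,\phi_N)$ with a projection $p_N$ of trace $\lambda/N$ (for instance take $\C_N=\mathbb{C}^2$ with a suitable diagonal state). Since $a_i=a_i^*$ for every $i\in I\sqcup J$ and $p_N=p_N^*=p_N^2$, each element $a_i\otimes p_N$ is self-adjoint in the $C^*$-probability space $(\A_N,\varphi_N)=(\A\otimes\C_N,\varphi\otimes\phi_N)$. Next, realize the bi-free sequence of $N$ identically distributed copies $\{(a_{N,i,m})_{i\in I},(a_{N,j,m})_{j\in J}:m=1,\ldots,N\}$ inside a larger $C^*$-probability space, which is possible by the standard $C^*$-algebraic construction of bi-free independent copies (the bi-free product of $C^*$-probability spaces; cf.\ the references in Section~1). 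Then the sums $S_{N,k}=\sum_{m=1}^{N}a_{N,k,m}$ are self-adjoint in this $C^*$-probability space, and so is every $*$-polynomial in the $S_{N,k}$.

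Let $\mu_N$ denote the joint distribution of $S_N=((S_{N,i})_{i\in I},(S_{N,j})_{j\in J})$, viewed as a linear functional on $\mathbb{C}\langle X_k:k\in I\sqcup J\rangle$ via the substitution $X_k\mapsto S_{N,k}$. Since each $S_{N,k}$ is self-adjoint and $\varphi_N$ is a state on a $C^*$-algebra, for any $*$-polynomial $q\in\mathbb{C}\langle X_k\rangle$ we have
\[
\mu_N(q^*q)=\varphi_N\bigl(q(S_N)^*\,q(S_N)\bigr)\ge 0,
\]
so each $\mu_N$ is a positive linear functional on $\mathbb{C}\langle X_k\rangle$. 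By Theorem~2.3, $\mu_N(q)\to P_{\lambda,\mu_a}(q)$ for every monomial $q$, hence for every polynomial $q\in\mathbb{C}\langle X_k\rangle$. Applying this to $q^*q$ gives
\[
P_{\lambda,\mu_a}(q^*q)=\lim_{N\to\infty}\mu_N(q^*q)\ge 0,
\]
which is the required positivity.

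The only non-routine point is arranging, for each $N$, a genuine $C^*$-probability realization of the bi-free copies used in Theorem 2.3; this is where the self-adjointness hypothesis and the $C^*$-structure of $(\A,\varphi)$ are used, but it is supplied by the standard reduced bi-free product construction. Once this is in place, the argument is simply positivity of states on $C^*$-algebras, combined with the pointwise convergence already established in Theorem 2.3.
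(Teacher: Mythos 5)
Your proposal is correct and follows essentially the same route as the paper: both show that the approximating distributions $\mu_{S_N}$ from Theorem 2.3 are positive (the paper by asserting that $\boxplus\boxplus$ preserves positivity of functionals, you by realizing the bi-free copies in a $C^*$-probability space, which is the same underlying fact) and then pass positivity to the weak limit $P_{\lambda,\mu_a}$. If anything, your verification of $\mu_N(q^*q)\ge 0$ for arbitrary $*$-polynomials $q$ is more complete than the paper's, which only writes out the computation for linear polynomials.
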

\begin{proof}
For a two-faced family $c=((c_i)_{i\in I}, (c_j)_{j\in J})$ of self-adjoint random variables in $\A$, $\mu_c:\mathbb{C}\langle X_k: k\in I\bigsqcup J\rangle \rightarrow \mathbb{C}$ is positive. In fact, for a polynomial $P=\sum \alpha_kX_k\in \mathbb{C}\langle X_k: k\in I\bigsqcup J\rangle$, we have $$\mu(c)(P^*P)=\mu_c(\sum_{k, k'}\overline{\alpha_k}\alpha_{k'}X_{k}X_{k'})=\sum_{k,k'}\overline{\alpha_k}\alpha_{k'}\varphi(c_{k}c_{k'})=\varphi((\sum_{k, k'}\alpha_kc_k)^*(\sum_{k,k'}\alpha_kc_k))\ge 0.$$ Moreover, if $\mu_1$ and $\mu_2$ are positive linear functionals on $\mathbb{C}\langle X_k: k\in I\bigsqcup J\rangle$, then $\mu_1\boxplus\boxplus\mu_2$ is positive. Let $\mu_N$ be the distribution of $((a_i\otimes p_N)_{i\in I}, (a_j\otimes p_N)_{j\in J})$ of self-adjoint operators in $\A_N=\A\otimes_m \C_N$, the spacial tensor product of $C^*$-algebras $\A$ and $\C_N$, in the proof of Theorem 2.3. Then $\mu_{S_N}=\mu_N^{\boxplus\boxplus N}$ is positive.  By Theorem 2.3, $P_{\lambda, \mu_a}$ is the weak limit of $\mu_{S_N}$. It implies that the bi-free Poisson distribution $P_{\lambda, \mu_a}$ is positive.
\end{proof}
\section{Bi-free Infinitely Divisible Distributions}
Bi-free infinite divisibility of the distribution of a pair $(a,b)$ of random variables was defined and studied in \cite{GHM} and \cite{MG}. We now generalize the concept to a more general case of two-faced families. Like in free probability (see Section 4.5 in \cite{RS}), a bi-free infinitely divisible distribution can be approached by compound bi-free Poisson distributions.

\begin{Definition} When $X_k=X^*_k$ for $k\in I\bigsqcup J$, $\mathbb{C}\langle X_k: k\in I \bigsqcup J\rangle$ becomes a unital $*$-algebra. In this case, we use $\Sigma^+(I, J)$ to denote the set of all positive unital linear functional on $\mathbb{C}\langle X_k: k\in I \bigsqcup J\rangle$.
 \begin{enumerate}
\item A distribution $\mu\in \Sigma^+(I, J)$ is bi-free infinitely divisible if for each $N\in \mathbb{N}$, there is a distribution $\mu_{1/N}\in \Sigma^+(I,J)$ such that $$\mu=\underbrace{\mu_{1/N}\boxplus\boxplus\mu_{1/N}\boxplus\boxplus\cdots\boxplus\boxplus\mu_{1/N}}_{N \text{ times}}.$$ In the language of random variables, we have the following equivalent definition.
\item A two-faced family $((z_i)_{i\in I}, (z_j)_{j\in J})$ of self-adjoint random variables in a $*$-probability space $(\A, \varphi)$ has a bi-free infinitely divisible distribution if for each $N\in \mathbb{N}$, there exits a bi-free sequence of $N$ identically distributed two-faced families $\{((z_{N, i, n})_{i\in I}, (z_{N,j, n})_{j\in J}):n=1, 2, \cdots, N \}$ of self-adjoint random variables  such that $((S_{N,i})_{i\in I}, (S_{N,j})_{j\in J})$ has the same distribution as that of $((z_i)_{i\in I}, (z_j)_{j\in J})$, where $S_{N,k}=\sum_{n=1}^Nz_{N, k, n}$ for $k\in I\bigsqcup J$.
\end{enumerate}
\end{Definition}
\begin{Remark}
There is a one to one correspondence between the set of distributions of two-faced families $((z_i)_{i\in I}, (z_j)_{j\in J})$ in  some non-commutative probability space $(\A, \varphi)$ and the set $\Sigma(I, J)$ of all unital linear functional on the unital polynomial algebra $\mathbb{C}(X_k:k\in I\bigsqcup J)$ in non-commutative variables $\{X_k:k\in I\bigsqcup J\}$.

In fact, for $z:=((z_i)_{i\in I}, (z_j)_{j\in J})$ in $(\A, \varphi)$, define $\nu_z:\mathbb{C}(X_k:k\in I\bigsqcup J)\rightarrow \mathbb{C}$ by $$\nu_z(P(X_k:k\in I\bigsqcup J))=\varphi(P(z_k:k\in I \bigsqcup J)).$$ It is obvious that $\nu_z\in \Sigma(I\bigsqcup J)$. Conversely, Let $\nu\in \Sigma(I\bigsqcup J)$. Then $(\mathbb{C}(X_k:k\in I\bigsqcup J), \nu)$ is a non-commutative probability space, and $\nu$ is the distribution of $((X_i)_{i\in I}, (X_j)_{j\in J})$. Similarly, we can identify $\Sigma^+(I, J)$ with distributions of two-faced families of self-adjoint random variables.
Note that $\Sigma^+(I, J)$ is a convex set of linear functionals. Therefore, we can define $\lambda \nu_1 +(1-\lambda)\nu_2\in \sum(I, J)$, for $0\le \lambda \le 1$ and $\nu_1, \nu_2\in \sum(I, J)$.
\end{Remark}

Lemma 5.2 in \cite{GHM} states that if $a_1^*=a_1, a_2^*=a_2, [a_1, a_2]=0$, for $a_1, a_2$ in a $C^*$-probability space $(\A, \varphi)$, a projection $p\in \A$ free from $\{a_1, a_2\}$, then there exists a compactly supported probability measure $\mu$ on $\mathbb{R}^2$ such that  $$\kappa_{m,n}^\mu=\kappa_{m,n}^{p\A p}(\underbrace{pa_1p, \cdots, pa_1p}_{m \ \mathrm{ times}}, \underbrace{pa_2p, \cdots, pa_2p}_{n\  \mathrm{ times}}).$$ The following result gives a formula to compute the bi-free cumulants of a free projection compressed family, which is a bi-free analogue of Theorem 14.10 in \cite{NS}.
\begin{Theorem} Let $z=((z_i)_{i\in I}, (z_j)_{j\in J})$ be a two-faced family of random variables in a non-commutative -probability space $(\A, \varphi)$, and $p$ is a projection ($p=p^2$) in $\A$, which is free from $\{z_k:k\in I \bigsqcup J\}$,  and $\varphi(p)=\lambda\ne 0$. Then for a number $n\in \mathbb{N}$ and a function $\chi:\{1, 2, \cdots, n\}\rightarrow I\bigsqcup J$, we have $$\kappa_\chi^{p\A p}(pz_{\chi(1)}p, \cdots, pz_{\chi(n)}p)=\frac{1}{\lambda}\kappa_\chi(\lambda z_{\chi(1)}, \cdots, \lambda z_{\chi(n)}),$$ where $\kappa_\chi$ and $\kappa_\chi^{p\A p}$ are the bi-free cumulants of $(\A, \varphi)$ and $(p\A p, \varphi_p)$, respectively.
\end{Theorem}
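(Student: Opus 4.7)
The plan is to adapt the strategy of Theorem 14.10 of \cite{NS} (the free-probability analogue), replacing $NC(n)$ by $BNC(n,\chi)$. By the bi-free moment-cumulant formula (1.2) applied inside $(p\A p,\varphi_p)$, the compressed cumulants $\kappa^{p\A p}_{\chi,\pi}(pz_{\chi(1)}p,\ldots, pz_{\chi(n)}p)$ are determined through M\"obius inversion on $BNC(n,\chi)$ by the moments $\varphi_p(pz_{\chi(1)}p\cdots pz_{\chi(n)}p) = \lambda^{-1}\varphi(pz_{\chi(1)}p\cdots pz_{\chi(n)}p)$. Hence everything reduces to establishing the moment identity
\[
\varphi(pz_{\chi(1)}p\cdots pz_{\chi(n)}p) = \sum_{\pi\in BNC(n,\chi)} \lambda^{n+1-|\pi|}\, \kappa_{\chi,\pi}(z_{\chi(1)},\ldots,z_{\chi(n)}),
\]
from which inverting and matching block-by-block yields $\kappa^{p\A p}_{\chi,V}(pz_{V}p)=\lambda^{|V|-1}\kappa_{\chi_V}(z_V)$ for every block $V$. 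Setting $V=\{1,\ldots,n\}$ and using multilinearity to rewrite $\lambda^{n-1}\kappa_\chi(z_{\chi(1)},\ldots,z_{\chi(n)}) = \lambda^{-1}\kappa_\chi(\lambda z_{\chi(1)},\ldots,\lambda z_{\chi(n)})$ proves the theorem.

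For the moment identity, I would apply (1.2) to the length-$(2n+1)$ alternating product, labelling each $p$-slot as left and the $k$-th $z$-slot with $\chi(k)$; call this labelling $\chi''$. By bi-freeness of $\{p\}$ from $\{z_k:k\in I\sqcup J\}$ together with the vanishing criterion (1.3), only $\sigma\in BNC(2n+1,\chi'')$ whose blocks lie entirely in $p$-slots or entirely in $z$-slots can contribute, so $\sigma=\sigma_p\sqcup\sigma_z$ with $\sigma_z$ in bijection (via $2k\mapsto k$) with an element $\pi\in BNC(n,\chi)$. For fixed $\pi$, the inner sum of $\kappa_{\chi'',\sigma_p}(p,\ldots,p)$ over all $\sigma_p$ compatible with $\sigma_z$ should telescope, by the bi-free analogue of Theorem 14.4 of \cite{NS}, to the $p$-moment $\varphi_{K_\chi(\pi)}(p,\ldots,p)$ indexed by the bi-free Kreweras complement $K_\chi(\pi):=s_\chi\circ K(s_\chi^{-1}\circ\pi)$. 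Since $p^2=p$ forces $\varphi(p^k)=\lambda$ for all $k\ge 1$, this equals $\lambda^{|K_\chi(\pi)|}$, and the identity $|\pi|+|K_\chi(\pi)|=n+1$ (inherited from the classical identity $|\tau|+|K(\tau)|=n+1$ through the bijection $s_\chi:NC(n)\to BNC(n,\chi)$) produces the exponent $\lambda^{n+1-|\pi|}$.

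The main obstacle is the bi-free Kreweras summation step: verifying that summing bi-free cumulants of $p$ over all $\sigma_p$ that make $\sigma_p\sqcup\sigma_z$ globally bi-non-crossing really produces $\varphi_{K_\chi(\pi)}(p,\ldots,p)$. In the free case this is \cite[Theorem 14.4]{NS} and rests on the Kreweras complement being the maximum non-crossing partition on the interspersed slots compatible with the given one. The bi-free adaptation should follow from transporting bi-non-crossing partitions on the $2n+1$ interleaved slots back to ordinary non-crossing ones via $s_{\chi''}$, but one must carefully verify that this transport is compatible with the decomposition into $p$-slots and $z$-slots so that the classical Kreweras sum is recovered inside each fiber. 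Apart from this combinatorial bookkeeping, every other step is a direct transcription of the free-case proof.
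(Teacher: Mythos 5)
Your strategy---re-deriving the compression formula from scratch by expanding $\varphi(pz_{\chi(1)}p\cdots pz_{\chi(n)}p)$ over $BNC(2n+1,\chi'')$ and discarding mixed blocks---is not the route the paper takes, and it contains a step that fails under the stated hypothesis. The theorem assumes only that $p$ is \emph{free} in the ordinary sense from $\{z_k:k\in I\sqcup J\}$, whereas your reduction to partitions $\sigma=\sigma_p\sqcup\sigma_z$ invokes the vanishing criterion $(1.3)$, which requires the one-faced family $(p)$ (labelled left) to be \emph{bi-free} from $((z_i)_{i\in I},(z_j)_{j\in J})$. These hypotheses are genuinely different here, and in fact incompatible once $J\neq\emptyset$: bi-freeness of a left $p$ from a right $z_j$ forces classical independence (left faces of one family and right faces of the other commute in distribution), so it would give $\varphi(pz_jpz_j)=\varphi(p^2)\varphi(z_j^2)=\lambda\varphi(z_j^2)$ when $\varphi(z_j)=0$, while ordinary freeness gives $\varphi(pz_jpz_j)=\lambda^2\varphi(z_j^2)$. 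Equivalently, since $\{\{1,3\},\{2,4\}\}\in BNC(4,(l,r,l,r))$, the mixed bi-free cumulant $\kappa_{(l,r,l,r)}(p,z_j,p,z_j)=-\lambda(1-\lambda)\varphi(z_j^2)$ is \emph{nonzero} under the theorem's hypothesis. So the very first reduction in your moment identity is false as stated, independently of the bi-free Kreweras summation that you already flagged as unverified.

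The paper's proof sidesteps all of this. It first shows that the lattice isomorphism $s_\chi:NC(n)\to BNC(\chi)$ transports the M\"obius inversion so that $\kappa_\chi(z_{\chi(1)},\ldots,z_{\chi(n)})=\kappa_n(z_{\chi(s_\chi(1))},\ldots,z_{\chi(s_\chi(n))})$, an ordinary free cumulant of the $s_\chi$-reordered tuple; it then applies Theorem 14.10 of \cite{NS} verbatim to that tuple (which needs exactly the ordinary freeness of $p$ that is assumed), and transports back. No bi-free independence of $p$ from anything is ever invoked, and no new Kreweras-type lemma is needed. To salvage your approach you would have to either strengthen the hypothesis to bi-freeness of $p$ from the family, or establish the moment identity using ordinary freeness alone---which is essentially what the paper's permutation argument accomplishes in two lines.
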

\begin{proof}Let $\chi:\{1, 2, \cdots, n\}\rightarrow I\bigsqcup J$, $\chi^{-1}(I)=\{i_1<i_2<\cdots i_k\}$ and $$\chi^{-1}(J)=\{i_{k+1}>i_{k+2}>\cdots >i_{n}\}.$$ As in Section 1, we define a permutation $s_\chi\in S_n$, $s_\chi(j)=i_j$, for $j=1, 2, \cdots, n$. The permutation $s_\chi $ defines a lattice isomorphism from $NC(n)$ onto $BNC(\chi)$ by $\pi \mapsto s_\chi\cdot\pi$, for $\pi\in NC(n)$, where $$s_\chi\cdot \pi=\{s_\chi\cdot V=\{s_\chi(t_1), s_\chi(t_2), \cdots, s_\chi(t_k)\}: V=\{t_1, t_2, \cdots, t_k\}\in \pi\}.$$ Thus, $s_\chi\cdot \pi$ is the corresponding partition of $\pi$ in the new partial ordered set $$\{s_\chi(1)\prec s_\chi(2)\prec\cdots \prec s_\chi(n)\}.$$ It implies that $\varphi_\pi(z_{\chi(s_\chi(1))}, z_{\chi(s_\chi(2))}, \cdots, z_{\chi(s_\chi(n))})=\varphi_{s_\chi\cdot \pi}(z_{\chi(1)}, z_{\chi(2)}, \cdots, z_{\chi(n)})$. Therefore, by $(1.1)$, we have
\begin{align*}
\kappa_n(z_{\chi(s_\chi(1))}, \cdots, z_{\chi(s_\chi(n))})=&\sum_{\pi\in NC(n)}\varphi_\pi(z_{\chi(s_\chi(1))}, \cdots, z_{\chi(s_\chi(n))})\mu_{NC}(\pi, 1_n)\\
=&\sum_{\pi\in NC(n)}\varphi_{s_\chi\cdot\pi}(z_{\chi(1)}, \cdots, z_{\chi(n)})\mu_{BNC(\chi)}(s_\chi\cdot\pi,1_n)\\
=&\sum_{\sigma\in BNC(\chi)}\varphi_\sigma(z_{\chi(1)}, \cdots, z_{\chi(n)})\mu_{BNC(\chi)}(\sigma,1_n))\\
=&\kappa_\chi(z_{\chi(1)}, \cdots, z_{\chi(n)}).
\end{align*}
By Theorem 14.10 in \cite{NS}, we get
\begin{align*}\kappa_\chi^{p\A p}(pz_{\chi(1)}p, \cdots, pz_{\chi(n)}p)=&\kappa_n^{p\A p}(pz_{\chi(s_\chi(1))}p, \cdots, pz_{\chi(s_\chi(n))}p)\\
=&\frac{1}{\lambda}\kappa_n(\lambda z_{\chi(s_\chi(1))}, \cdots, \lambda z_{\chi(s_\chi(n))})\\
=&\frac{1}{\lambda}\kappa_{\chi}(\lambda z_{\chi(1)}, \cdots, \lambda z_{\chi(n)}).
\end{align*}
\end{proof}

The following theorem characterizes the bi-free infinite divisibility of a distribution in $\Sigma^+(I,J)$ in terms of the existence of a bi-free additive convolution semigroup of distributions in $\Sigma^+(I, J)$ associated with the distribution.
 \begin{Theorem} Let $\nu\in \Sigma^+(I, J)$. Then the distribution $\nu$ is bi-free infinitely divisible if and only if there is a semigroup $\{\nu_t: t\ge 0\}$ of distributions in $\Sigma^+(I, J)$ such that $\nu_{s+t}=\nu_s\boxplus\boxplus\nu_t$ for $s, t\ge 1$, $\nu_1=\nu$,  $\nu_0=\delta_0$, and $\lim_{t\rightarrow 0}\nu_t=\delta_0$ weakly.
 \end{Theorem}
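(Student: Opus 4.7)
The $(\Leftarrow)$ direction is immediate: given a semigroup $\{\nu_t\}_{t\ge 0}$ with the stated properties, take $\mu_{1/N}:=\nu_{1/N}\in\Sigma^+(I,J)$; then by the semigroup identity iterated, $\nu=\nu_1=\nu_{1/N}^{\boxplus\boxplus N}$, so $\nu$ is bi-free infinitely divisible in the sense of Definition~3.1. No further work is needed for this implication.

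For $(\Rightarrow)$, the main idea is to build the semigroup at the level of bi-free cumulants, exploiting the fact that $\boxplus\boxplus$ is characterized by additivity of cumulants (the vanishing criterion (1.3) applied to $(a_k+b_k)$ shows $\kappa_\chi^{\mu\boxplus\boxplus\rho}=\kappa_\chi^\mu+\kappa_\chi^\rho$). First I will argue uniqueness of $N$-th roots: if $\mu^{\boxplus\boxplus N}=\nu$, then $\kappa_\chi^\mu=\frac{1}{N}\kappa_\chi^\nu$ for every $n$ and every $\chi$, and since the moment-cumulant relation (1.2) recovers the moments from the cumulants, $\mu$ is uniquely determined. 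Hence for each $N$ the root $\nu_{1/N}$ furnished by Definition~3.1 is unique, and for rationals $r=k/N\ge 0$ one may unambiguously set $\nu_r$ to be the functional with bi-free cumulants $r\,\kappa_\chi^\nu$; using additivity of cumulants one verifies that $\nu_r$ coincides with $\nu_{1/N}^{\boxplus\boxplus k}$ (so it lies in $\Sigma^+(I,J)$), is independent of the representation $r=k/N$, and satisfies $\nu_{r+s}=\nu_r\boxplus\boxplus\nu_s$ on nonnegative rationals.

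Next I extend to all real $t\ge 0$. Define $\nu_t$ on $\mathbb{C}\langle X_k:k\in I\sqcup J\rangle$ to be the unique linear functional whose bi-free cumulants are $t\,\kappa_\chi^\nu$; this makes sense because a collection of cumulants determines a unique linear functional via (1.2). Pick rationals $r_n\to t$ with $r_n\ge 0$; since each moment $\nu_{r_n}(X_{\chi(1)}\cdots X_{\chi(n)})$ is a polynomial in the cumulants $r_n\kappa_{\chi',V}^\nu$ (summed over $BNC(n,\chi)$), and each such polynomial depends continuously on $r_n$, we obtain $\nu_{r_n}\to\nu_t$ weakly, i.e.\ pointwise on the polynomial algebra. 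The semigroup law $\nu_{s+t}=\nu_s\boxplus\boxplus\nu_t$ then follows for all $s,t\ge 0$ by additivity of cumulants, and $\nu_0=\delta_0$ is immediate since the cumulants vanish. Weak continuity at $0$ is automatic: as $t\to 0$ the cumulants $t\,\kappa_\chi^\nu$ tend to $0$, so every moment of $\nu_t$ tends to the corresponding moment of $\delta_0$.

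The one genuinely delicate point---the step I expect to be the main obstacle---is verifying that $\nu_t\in\Sigma^+(I,J)$ for \emph{irrational} $t>0$, i.e.\ preserving positivity. For rational $t$ this is built into the construction, but for general $t$ I only have $\nu_t$ as a weak limit of the positive functionals $\nu_{r_n}$. Positivity is preserved under such limits because, for any polynomial $P$, $\nu_t(P^*P)=\lim_n\nu_{r_n}(P^*P)\ge 0$; this uses only that the cone of positive functionals on $\mathbb{C}\langle X_k:k\in I\sqcup J\rangle$ is closed in the topology of pointwise convergence, which is the analogue of the argument used at the end of the proof of Corollary~2.5. Once this is in hand, the semigroup $\{\nu_t\}_{t\ge 0}\subset\Sigma^+(I,J)$ has all the required properties and the theorem follows.
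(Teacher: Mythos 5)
Your proposal is correct and follows essentially the same route as the paper: both directions are handled identically, with the semigroup built by scaling the bi-free cumulants by $t$ (first for rationals via iterated roots and convolutions, then for real $t$ by passing to limits of moments). You are in fact slightly more careful than the paper at the one delicate point — justifying $\nu_t\in\Sigma^+(I,J)$ for irrational $t$ via closedness of the positive cone under pointwise convergence, and noting uniqueness of $N$-th roots — both of which the paper asserts without comment.
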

\begin{proof}

Let $\nu$ be a bi-free infinitely divisible distribution in $\Sigma^+(I,J)$. Then, for every $2\le n\in \mathbb{N}$, there is a two-faced family $z_{1/n}=((z_{i, 1/n})_{i\in I}, (z_{j,1/n})_{j\in J})$ of self-adjoint random variables in a $*$-probability space $(\A_{1/n}, \varphi_{1/n})$ such that $\nu_{1/n}^{\boxplus\boxplus n}=\nu$, where $\nu_{1/n}\in \Sigma^+(I, J)$ is the distribution of $z_{1/n}$, Therefore, for $\chi: \{1, 2, \cdots, n\}\rightarrow I\bigsqcup J$, we have $\kappa_{\chi, z_{1/n}}=\frac{1}{n}\kappa_{\chi,z}$. By performing bi-free additive convolution, we can get  a distribution $\nu_{r}\in \Sigma^+(I, J)$ such that $\kappa_{\chi, \nu_r}=r\kappa_{\chi, z}$, for a positive rational number $r$. For a real number $t>0$, there is a sequence $\{r_n:n=1, 2, \cdots\}$ of positive rational numbers such that $\lim_{n\rightarrow \infty}r_n=t$, we define distribution $\nu_t$ as follows. Whenever $n\in \mathbb{N}, \chi:\{1, 2, \cdots, n\}\rightarrow I\bigsqcup J$, the moment
\begin{align*}
m_{\chi, \nu_t}=\nu_t(X_{\chi(1)}X_{\chi(2)}\cdots X_{\chi(n)}):=&\lim_{n\rightarrow \infty}\varphi_{r_n}(z_{\chi(1)}\cdots z_{\chi(n)})=\lim_{n\rightarrow\infty}\sum_{\pi\in BNC(\chi)}\kappa_{\chi, \nu_{r_n}}\\
=&\lim_{n\rightarrow \infty}\sum_{\pi\in BNC(\chi)}r_n\kappa_{\chi,\nu}=\sum_{\pi\in BNC(\chi)}t\kappa_{\chi, \nu}.
\end{align*} Define $\nu_t(1)=1$. Then $\nu_t\in \Sigma^+(I, J)$ and $\kappa_{\chi, \nu_t}=t\kappa_{\chi, \nu}$. Define $\nu_0=\delta_0$. Then $\{\nu_t:t\ge 0\}$ is semigroup of distributions in $\Sigma^+(I, J)$ with respect to the bi-free additive convolution. Moreover, $\nu_t\rightarrow \delta_0$ weakly, as $t\rightarrow 0+$, since $\lim_{t\rightarrow 0}m_{\chi, \nu_t}=0$, for $n\in \mathbb{N}$ and $\chi:\{1, 2, \cdots, n\}\rightarrow I\bigsqcup J$.

Conversely, if $\{\nu_t: t\ge 0\}$ is such a semigroup,  it is obvious that $\nu_1$ is bi-free infinitely divisible.
\end{proof}
The following theorem gives a bi-free Poisson approach to bi-free infinitely divisible distributions, which is a bi-free analogue of Theorem 4.5.5 in \cite{RS}.
\begin{Theorem} A distribution $\nu\in \Sigma^+(I, J)$ is bi-free infinitely divisible if and only if there is a sequence $\{P_{\lambda_n,\nu_n}:n=1,2,\cdots\}$ of compound bi-free  Poisson distributions determined by  $\lambda_n>0$ and $\nu_n\in \Sigma^+(I, J)$ such that $P_{\lambda_n,\nu_n}\rightarrow \nu$ weakly, as $n\rightarrow \infty$.
\end{Theorem}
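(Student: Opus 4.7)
The plan is to use the semigroup characterization from Theorem 3.4 together with the moment--cumulant relation (1.2) and the defining identity of a compound bi-free Poisson distribution from Definition 2.1. The key identity underlying both directions is that $P_{\lambda,\mu}$ has bi-free cumulants $\kappa_{\chi,P_{\lambda,\mu}} = \lambda\, m_{\chi,\mu}$, where $m_{\chi,\mu}$ denotes the moment $\mu(X_{\chi(1)}\cdots X_{\chi(n)})$. Since weak convergence in $\Sigma^+(I,J)$ amounts to pointwise convergence on the polynomial algebra, i.e.\ convergence of all moments, the Möbius inversion (1.1) makes it equivalent to the convergence of all bi-free cumulants $\kappa_{\chi,\cdot}$. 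Both directions will be handled at the level of cumulants.

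For the ``only if'' direction, assume $\nu$ is bi-free infinitely divisible. Theorem 3.4 supplies a semigroup $\{\nu_t:t\ge 0\}\subset \Sigma^+(I,J)$ with $\nu_1=\nu$ and $\kappa_{\chi,\nu_t}=t\,\kappa_{\chi,\nu}$, as already exhibited in the proof of that theorem. I would take $\lambda_n=n$ and $\nu_n:=\nu_{1/n}$, and then expand using (1.2) and the multiplicativity of $\kappa_{\chi,\pi,\cdot}$ over blocks:
\begin{align*}
\kappa_{\chi,P_{n,\nu_{1/n}}} &= n\cdot m_{\chi,\nu_{1/n}} = n\sum_{\pi\in BNC(n,\chi)} \kappa_{\chi,\pi,\nu_{1/n}}\\
&= n\sum_{\pi\in BNC(n,\chi)} \prod_{V\in\pi}\tfrac{1}{n}\kappa_{\chi_V,\nu} = \sum_{\pi\in BNC(n,\chi)} n^{1-|\pi|}\,\kappa_{\chi,\pi,\nu}.
\end{align*}
As $n\to\infty$, every term with $|\pi|\ge 2$ decays to $0$, and only the single-block partition $\pi=1_n$ survives, yielding the limit $\kappa_{\chi,\nu}$. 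Hence the bi-free cumulants of $P_{n,\nu_{1/n}}$ converge to those of $\nu$, so by (1.2) the moments converge, which gives $P_{n,\nu_{1/n}}\to\nu$ weakly.

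For the ``if'' direction, suppose $P_{\lambda_n,\nu_n}\to\nu$ weakly, and fix $N\in\mathbb{N}$. Then the cumulants of $P_{\lambda_n/N,\nu_n}$ equal $\tfrac{\lambda_n}{N}m_{\chi,\nu_n}=\tfrac{1}{N}\kappa_{\chi,P_{\lambda_n,\nu_n}}$, which by hypothesis converges to $\tfrac{1}{N}\kappa_{\chi,\nu}$. By (1.2), the moments of $P_{\lambda_n/N,\nu_n}$ therefore converge, so there is a limiting unital linear functional $\eta_{1/N}$ on the polynomial algebra with $\kappa_{\chi,\eta_{1/N}}=\tfrac{1}{N}\kappa_{\chi,\nu}$. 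Each $P_{\lambda_n/N,\nu_n}$ belongs to $\Sigma^+(I,J)$ by Corollary 2.4, and positivity passes through pointwise limits since $\eta_{1/N}(P^*P)=\lim_n P_{\lambda_n/N,\nu_n}(P^*P)\ge 0$, so $\eta_{1/N}\in\Sigma^+(I,J)$. Additivity of bi-free cumulants under $\boxplus\boxplus$ then yields $\eta_{1/N}^{\boxplus\boxplus N}=\nu$, proving bi-free infinite divisibility of $\nu$. The main obstacle is the converse's positivity bookkeeping: one must be sure that the approximating $N$-th roots $P_{\lambda_n/N,\nu_n}$ themselves lie in $\Sigma^+(I,J)$ before taking the weak limit, which is exactly what Corollary 2.4 provides.
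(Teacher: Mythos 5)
Your proposal is correct and follows essentially the same route as the paper: the ``only if'' direction is the identical computation $\kappa_{\chi,P_{n,\nu_{1/n}}}=n\sum_{\pi\in BNC(\chi)}n^{-|\pi|}\kappa_{\chi,\pi,\nu}\to\kappa_{\chi,\nu}$ using the semigroup from Theorem 3.4, and your converse merely inlines the paper's two steps (each $P_{\lambda,\nu}$ is infinitely divisible via the roots $P_{\lambda/N,\nu}$, and infinite divisibility passes to weak limits) by taking $\eta_{1/N}$ as the weak limit of the same roots $P_{\lambda_n/N,\nu_n}$. Your explicit remarks on positivity of the roots and of the pointwise limit are exactly the bookkeeping the paper leaves implicit, so no substantive difference.
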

\begin{proof}If $P_{\lambda, \nu}$ is a bi-free compound Poisson distribution with  $\nu\in \Sigma^+(I, J)$, then for $n\in \mathbb{N}$ and $\chi:\{1,2,\cdots, n\}\rightarrow I\bigsqcup J$, we have $$\kappa_{\chi,P_{\lambda, \nu}}=\lambda m_{\chi,\nu}=n(\frac{\lambda}{n}m_{\chi, \nu})=n\kappa_{\chi, P_{\lambda/n, \nu}}.$$ It follows that $P_{\lambda, \nu}=(P_{\lambda/n, \nu})^{\boxplus\boxplus n}$ and $P_{\lambda/n, \nu}\in \Sigma^+(I, J)$. Therefore, $P_{\lambda, \nu}$ is bi-free infinitely divisible. Furthermore, If $\nu_k\rightarrow \nu$ weakly in $\Sigma^+(I, J)$, and $\nu_k$ is bi-free infinitely divisible for each $k\ge 1$, then for $n\in \mathbb{N}$, and $\chi:\{1,2,\cdots, n\}\rightarrow I\bigsqcup J$, we have $$\kappa_{\chi,\nu}=\lim_{k\rightarrow \infty}\kappa_{\chi,\nu_k}=n\lim_{k\rightarrow \infty}\frac{1}{n}\kappa_{\chi, \nu_k}=n\lim_{k\rightarrow \infty}\kappa_{\chi, \nu_{k, 1/n}},\eqno (3.1) $$ where $\nu_{k, 1/n}\in \Sigma^+(I, J)$ such that $(\nu_{k, 1/n})^{\boxplus\boxplus n}=\nu_k$, for $k=1, 2, \cdots$. Define $\nu_{1/n}$ as the weak limit of $\nu_{k,1/n}$, as $k\rightarrow \infty$. This weak limit exists and is in $\Sigma^+(I, J)$, because of $(3.1)$. The equation (3.1) also shows that $\kappa_{\chi, \nu}=n\kappa_{\chi, \nu_{1/n}}$. Therefore, $\nu=\nu_{1/n}^{\boxplus\boxplus n}$, that is, $\nu$ is bi-free infinitely divisible.

Conversely, if $\nu\in \Sigma^+(I, J)$ is bi-free infinitely divisible, then $\nu$ can be extended to a semigroup $\{\nu_t\in \Sigma^+(I, J):t\ge 0\}$ with $\nu_1=\nu$, by Theorem 3.4. Thus, we can define a sequence $\{P_{k, \nu_{1/k}}: k=1, 2, \cdots\}$ of bi-free compound Poisson distributions. For $m\in \mathbb{N}$ and $\chi:\{1,2,\cdots, m\}\rightarrow I\bigsqcup J$, we then have
\begin{align*}
\lim_{n\rightarrow\infty}\kappa_{\chi, P_{n, \nu_{1/n}}}=&\lim_{n\rightarrow \infty}nm_{\chi, \nu_{1/n}}=\lim_{n\rightarrow \infty}n\sum_{\sigma\in BNC(\chi)}\kappa_{\chi, \nu_{1/n}, \sigma}\\
=&\lim_{n\rightarrow \infty}n\sum_{\sigma\in BNC(\chi)}\prod_{V\in\sigma}\frac{1}{n}\kappa_{\chi|_{V},\nu}\\
=&\lim_{n\rightarrow \infty}n\sum_{\sigma\in BNC(\chi)}\frac{1}{n^{|\sigma|}}\prod_{V\in \pi}\kappa_{\chi|_{V},\nu}=\kappa_{\chi, \nu}.
\end{align*}
\end{proof}

\section{Random Bi-Matrix Models}

The goal of this section is to construct random bi-matrix families (see Definition 1.1 for the definition) to approximate in distribution to a bi-free compound Poisson distribution $P(\lambda, \mu_a )$, when $a=((a_i)_{i\in I}, (a_{j})_{j\in J})$ has an almost sure random matrix model. Our result will generalize Example 4.15 in \cite{PS} to a much more general case. We first recall a concept from \cite{HP}.

\begin{Definition}[Page 125 in \cite{HP}, Page 19 in \cite{MS}] An $n\times n$ complex self-adjoint random matrix is called a GUE random matrix (GUE stands for Gaussian unitary ensemble) if
\begin{enumerate}
\item $\{\Re X_{ij}(n): 1\le i\le j\le n\}\cup\{\Im X_{ij}(n): 1\le i<j\le n\}$ is an independent family of Gaussian random variables, and
\item $E(X_{ij}(n))=0$, for all $i, j$, $E((X_{ii}(n))^2)=\frac{1}{n}$, for all $i$, and $$E((\Re X_{ij}(n))^2)=E((\Im X_{ij}(n))^2)=\frac{1}{2n},$$ for $1\le i<j\le n$.
\end{enumerate}
\end{Definition}

A tuple $\{X(1,n), ..., X(N,n)\}$ of $n\times n$ random matrices on a probability space $(\Omega, P)$ has {\sl an almost sure limit in distributions}, if there is a tuple $(a_1,..., a_N)$ in a non-commutative probability space $(\A, \varphi)$ such that for almost all $\omega\in \Omega$, $\{X(1,n,\omega), ..., X(N,n,\omega)\}\subset (M_n(\mathbb{C}), tr_n)$ converges in distribution to  $(a_1,..., a_N)$, as $n\rightarrow \infty$. If, furthermore, $\{a_1, ..., a_N\}$ is a free family of random variables, we say that {\sl $X(1,n), ..., X(N,n)$ are almost surely asymptotically  free} (Section 4.1 in \cite{MS}). In this case, we call $\{X(1,n),..., X(N,n))\}$  an almost sure random matrix model of $(a_1,..., a_N)$.

The following theorem gives a random matrix model for a compound free Poisson distribution determined by $\lambda>0$ and a tuple of random variables $a=(a_1,..., a_N)$, generalizing the work on random matrix models of compound free Poisson distributions of single random variables in Section 4.4 of \cite{HP} to the multi-dimensional random variable case.
\begin{Theorem}
Let  $a_1, a_2, \cdots, a_N$ be self-adjoint elements in a $C^*$-probability space $(\A,\varphi)$, and $\lambda>0$ be a positive number. If $\{a_1, ..., a_N\}$ has an almost sure random matrix model, then   there are a subsequence $\alpha(n)$ of natural numbers and a  sequence $\{Y(n, i):i=1, 2, \cdots N\}$ of $\alpha(n)\times \alpha(n)$ random  matrices with the following property.  When $n\rightarrow \infty$, the sequence converges in distribution to the multidimensional free compound Poisson distribution determined by $\lambda$ and the distribution of $\{a_i\}_{i=1}^N.$
\end{Theorem}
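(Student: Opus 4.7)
The construction generalizes the single-variable Hiai--Petz random matrix model for compound free Poisson (Proposition 4.4.11 in \cite{HP}) by replacing the diagonal matrix of i.i.d.\ $\nu$-samples with a block-diagonal matrix built from independent copies of the given almost sure random matrix model. Choose integer sequences $p(n),n_1(n)\to\infty$ with $p(n)n_1(n)/n\to\lambda$ (for instance $p(n)=n_1(n)=\lfloor\sqrt{\lambda n}\rfloor$) and set $\alpha(n)=n$. On an enlarged probability space, take $p(n)$ independent copies $\{X^{(k)}(i,n_1(n))\}_{k=1,\ldots,p(n);\,i=1,\ldots,N}$ of the given almost sure matrix model at size $n_1(n)$, and assemble the block-diagonal matrices
$$\tilde X(i,n):=\bigoplus_{k=1}^{p(n)}X^{(k)}(i,n_1(n))\in M_{p(n)n_1(n)}(\mathbb{C}).$$
Let $V(n)$ be an independent $n\times p(n)n_1(n)$ complex Gaussian matrix with i.i.d.\ $\mathcal{N}_{\mathbb{C}}(0,1/n)$ entries, and define
$$Y(n,i):=V(n)\,\tilde X(i,n)\,V(n)^{*},\qquad i=1,\ldots,N.$$

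To verify convergence, I would compute the mixed moments $\mathbb{E}[tr_n(Y(n,i_1)\cdots Y(n,i_k))]$. Expanding the product as $V\,\tilde X(i_1)\,W\,\tilde X(i_2)\,W\cdots W\,\tilde X(i_k)\,V^{*}$ with $W:=V^{*}V$, applying cyclicity of the trace, and performing Gaussian Wick calculus on the $2k$ factors of $V,V^{*}$ (conditionally on a typical realization of the $X^{(k)}$'s), the leading-order contributions as $n\to\infty$ come from planar pairings. Each planar pairing corresponds to a noncrossing partition $\sigma$ of the positions of the $\tilde X$'s; the block-diagonal i.i.d.\ structure of $\tilde X$, together with the strong law of large numbers across the $p(n)$ independent blocks, contributes for each block $B\in\sigma$ a factor $(p(n)n_1(n)/n)\cdot(1/p(n))\sum_{k=1}^{p(n)}tr_{n_1(n)}(\prod_{j\in B}X^{(k)}(i_j,n_1(n)))$, which tends to $\lambda\cdot\varphi(\prod_{j\in B}a_{i_j})$. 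Assembling these factors yields
$$\lim_{n\to\infty}tr_n\bigl(Y(n,i_1)\cdots Y(n,i_k)\bigr)=\sum_{\sigma\in NC(k)}\lambda^{|\sigma|}\prod_{B\in\sigma}\varphi\!\Bigl(\prod_{j\in B}a_{i_j}\Bigr),$$
almost surely; by moment--cumulant inversion, the right side is precisely the $k$-th joint moment of $P(\lambda,\mu_a)$. Almost-sure convergence (rather than just of expectations) follows from Gaussian concentration for $V(n)$, the strong law for the $p(n)$ block traces, and a Borel--Cantelli argument.

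The main obstacle is the combinatorial bookkeeping in the Wick step: one must verify that non-planar pairings contribute only subleading powers of $n$, so that exactly the noncrossing partitions survive with the correct multiplicities, and that the $p(n)$-independence of the diagonal blocks produces cleanly the factors $\lambda^{|\sigma|}$ (one $\lambda$ per block). This is a direct generalization of the classical Wick derivation of the Marchenko--Pastur law (Theorem 4.1.9 in \cite{HP}) to the block / multi-variable setting, and the resulting identification matches the defining cumulants $\kappa_k(P(\lambda,\mu_a))=\lambda\,\varphi(a_{i_1}\cdots a_{i_k})$ of the multi-dimensional compound free Poisson distribution.
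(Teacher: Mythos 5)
Your construction is sound, but it takes a genuinely different route from the paper. You generalize the Wishart-type model of Proposition 4.4.11 in \cite{HP} directly: sandwich the (block-diagonalized) matrix model between a rectangular Gaussian $V$ and $V^*$ with aspect ratio tending to $\lambda$, and extract the limit by Wick/genus-expansion. The paper instead conjugates by a \emph{square} GUE matrix: it compresses the model to $\widetilde{B}(n,i)=B(p(n),i)\oplus 0_{n-p(n)}$ with $p(n)/n\to\lambda$ so that $tr_n$ of words in the $\widetilde{B}$'s tends to $\lambda\varphi(\cdots)$, invokes almost sure asymptotic freeness of a GUE matrix from the $\widetilde{B}$'s (Theorem 5 of Section 4.2 in \cite{MS}) to reduce to the algebraic identity $\kappa_m(sb_{\chi(1)}s,\dots,sb_{\chi(m)}s)=\varphi(b_{\chi(1)}\cdots b_{\chi(m)})$ for a semicircular $s$ free from the $b_i$ (Example 12.19 in \cite{NS}), and then, because the compression forces $p(n)\le n$, must treat $\lambda>1$ separately by writing $\lambda=k+\delta$ and summing $k+1$ Haar-unitarily conjugated (hence asymptotically free) copies. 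Your rectangular construction buys a uniform treatment of all $\lambda>0$ with no case split and no Haar unitaries, which is a real simplification; what the paper's route buys is that all the hard analysis is outsourced to standard asymptotic-freeness theorems (\cite{MS}, \cite{AGZ}) cited as black boxes, so that no Wick computation has to be performed at all.

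Two caveats on your version. First, the step you yourself flag as ``the main obstacle'' --- showing that only planar pairings survive, that they are indexed by noncrossing partitions with one factor $(m/n)\,tr_m(\prod_{j\in B}\tilde X(i_j))\to\lambda\varphi(\prod_{j\in B}a_{i_j})$ per block, and that the non-planar error terms are controlled \emph{almost surely} (which requires a.s.\ bounds on unnormalized traces of words in the random $\tilde X$'s, not just convergence of each normalized trace) --- is the entire analytic content of the theorem, and it is asserted rather than proved. It is a standard multi-matrix extension of the Marchenko--Pastur computation, so this is a gap in the write-up rather than in the idea, but as it stands the proof is a reduction to an uncited computation. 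Second, the block-diagonal assembly of $p(n)$ independent copies at size $n_1(n)$ is unnecessary and introduces a triangular-array law-of-large-numbers issue (you need uniform integrability of the block traces across $k$ to average them): since the hypothesis already gives a single almost sure matrix model, you can simply take one copy at size $m(n)$ with $m(n)/n\to\lambda$, for which $tr_{m(n)}$ of any fixed word converges a.s.\ with no averaging needed.
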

\begin{proof}
Let $\{B(n, 1),..., B(n,N)\}$ be an almost sure random matrix model of $(a_1, ..., a_N)$. We can choose $B(n, i)$ to be an $n\times n$ random matrix in a probability space $(\Omega,P)$, for $i=1, ..., N$, and $n=1,2,...$.
It is obvious that there is a number  $p(n)\in \mathbb{N}$, for each $n\in \mathbb{N}$,  such that $p(n)\le n$ and $\lim_{n\rightarrow \infty}\frac{p(n)}{n}=\lambda$, for $n=1, 2, \cdots$, when $\lambda\le 1$. Actually, we can choose $p(n)$ as follows.
\begin{enumerate}
\item When $\lambda <1$, let $p(n)=\lambda n-\delta(n)$, where $0\le \delta(n)<1$. Then $\lim_{n\rightarrow \infty}\frac{p(n)}{n}=\lambda.$
\item When $\lambda=1$, let $p(n)=n$.
\end{enumerate}
Choose an $n\times n$ standard self-adjoint Gaussian matrix $X(n)=(X_{ij}(n))_{n^N\times n^N}$, which is independent from $\{\widetilde{B}(n,i):, i=1, 2,..., N\}$, where   $\widetilde{B}(n,i)=B(p(n), i)\oplus 0_{n-p(n)}$. For $m\in \mathbb{N}$,  and $\chi:\{1, 2, \cdots, m\}\rightarrow \{1, 2, \cdots, N\}$,  we have
\begin{align*}
&tr_{n}(\widetilde{B}(n,\chi(1), \omega)\cdots \widetilde{B}(n,\chi(m), \omega))\\
=&tr_{n}(B(p(n), \chi(1), \omega)\cdots B(p(n), \chi(m), \omega)\oplus 0_{n-p(n)})\\
=&\frac{p(n)}{n}tr_{p(n)}(B(p(n), \chi(1), \omega)\cdots B(p(n), \chi(m), \omega))\\
\rightarrow &\lambda \varphi(a_{\chi(1)}a_{\chi(2)}\cdots a_{\chi(m)}),
\end{align*}
 as $n\rightarrow \infty$, for almost all $\omega\in \Omega$.

 Now we show that $\{X(n)\widetilde{B}(n,i)X(n):i=1, 2, \cdots, N\} $ converges in distribution to the multidimensional compound free Poisson distribution with parameters $\lambda$ and the distribution $\mu_a$ of $a:=\{a_1, a_2, \cdots, a_N\}$.
By Theorem 5 of Section 4.2 in  \cite{MS}, $X(n)$ and $\{\widetilde{B}(n, i): i=1, 2, \cdots, N\}$ are almost surely asymptotically free, and the limit distribution of $(X(n))$ is the standard semicircular element $s$,  Hence, when $n\rightarrow\infty$, $\{X(n)\widetilde{B}(n,i)X(n):i=1, 2, \cdots, N\}$ converges in distribution to $\{sb_is:i=1, 2, \cdots, N\}$, where $s,b_1, \cdots, b_N$ are in a $C^*$-probability space, say, $(\A, \varphi)$, such that
\begin{enumerate}
\item $s$ is a standard semicircular element,
\item $\{b_i=b_i^*: i=1,2, \cdots, N\}$ has the almost sure limit distribution of $\{\widetilde{B}(n, 1), \cdots, \widetilde{B}(n, N)\}$,
\item $\{s\}$ and $\{b_i:i=1, 2, \cdots, N\}$ are free.
\end{enumerate}
 By Example 12.19 in \cite{NS}, for $1<m\in \mathbb{N}$, $\chi:\{1, 2, \cdots, m\}\rightarrow \{1, 2, \cdots, N\}$,
 $$\kappa_m(sb_{\chi(1)}s,\cdots, sb_{\chi(m)}s)=\varphi(b_{\chi(1)}\cdots b_{\chi(m)})=\lambda\varphi(a_{\chi(1)}\cdots a_{\chi(m)}).$$ This shows that $\{sb_1s, sb_2s, \cdots, sb_Ns\}$ has the desired multidimensional compound free Poisson distribution.

For $\lambda>1$, let $\lambda=k+\delta$, where $k\in \mathbb{N}$ and $0\le \delta<1$. Let $$\{X(n)\widetilde{B}(n,i)X(n):i=1, 2, \cdots, N\},  \{X(n)\widetilde{C}(n,i)X(n):i=1, 2, \cdots, N\}$$ be  random matrix models of the multidimensional free Poisson distributions $P(1, \mu_a)$ and $P(\delta, \mu_{a})$, respectively, where $P(\lambda, \mu_a)$ is the multidimensional free Poisson distribution determined by parameter $\lambda>0$ and the distribution $\mu_a$ of the tuple $a=(a_1, a_2, \cdots, a_N)$. Furthermore, we place two families $\{\widetilde{B}(n,i):i=1, 2, \cdots, N\}\subset M_{n}(\mathbb{C})$ and $\{\widetilde{C}(n,i):i=1, 2, \cdots, N\}\subset M_{n}(\mathbb{C})$ into the tensor product algebra $M_{n^{2}}(\mathbb{C})=M_{n}(\mathbb{C})\otimes M_{n}(\mathbb{C})$ so that the two families are tensorial independent,  and, therefore, $\{\widetilde{B}(n,i), \widetilde{C}(n,i): i=1, 2, \cdots, N\} $ has an almost sure limit distribution, as $n\rightarrow \infty$. Let $\widetilde{X}(n)$ be an $n^{2}\times n^{2}$ standard self-adjoint Gaussian random matrix, which is independent from $\{\widetilde{B}(n,i), \widetilde{C}(n,i): i=1, 2, \cdots, N\}$, for $n\in \mathbb{N}$. Then $\{\widetilde{X}(n)\widetilde{B}(n,i)\widetilde{X}(n):i=1, 2, \cdots, N\}$ and $\{\widetilde{X}(n)\widetilde{C}(n,i)\widetilde{X}(n):i=1, 2, \cdots, N\}$ are random matrix models for the multidimensional free Poisson distributions $P(1, \mu_a)$ and $P(\delta, \mu_{a})$, respectively.  By Theorem 5 in Section 4.2 in \cite{MS}, $\{\widetilde{X}(n)\}$ and  $\{\widetilde{B}(n,i), \widetilde{C}(n,i): i=1, 2, \cdots, N\} $ are almost surely asymptotically free. It implies that $$\{\widetilde{X}(n)\widetilde{B}(n,i)\widetilde{X}(n), \widetilde{X}(n)\widetilde{C}(n,i)\widetilde{X}(n): i=1, 2, \cdots, N\} \subset (M_{n^{2}}(L^{-\infty}(\Omega, P)), E\circ tr)$$ has an almost surely limit distribution as $n\rightarrow \infty$.

  Let $l=\max\{2N, k+1\}$,  $D_{n,i}=\widetilde{X}(n)\widetilde{B}(n,i)\widetilde{X}(n)$, for $i=1, 2, \cdots, N$, $D_{n,N+i}=\widetilde{X}(n)\widetilde{C}(n,i)\widetilde{X}(n)$, for $i=1, 2, \cdots, N$, and $D_{n, 2N+i}=I$, the identity matrix with an appropriate size, if $l>2N$. By the above discussions, the family $$\mathbf{D}(n)=\{D_{n, i}: i=1, 2, \cdots, l\}\subset (M_{n^{2}}(L^{-\infty}(\Omega, P)), E\circ tr)$$  converges  in distribution to $\{sb_1s, \cdots, sb_Ns, sc_1s, \cdots, sc_Ns,  1\}$ almost surely, where $\{sc_1s, \cdots, sc_Ns\}$ is the tuple of limit random variables of $\{\widetilde{X}(n)\widetilde{C}(n, i)\widetilde{X}(n):i=1, 2, \cdots, N\}$. Moreover, $$\lim_{n\rightarrow \infty}tr(D_{n,i}^{2m})\le \max\{2^{2m}\varphi((sb_is)^{2m}), 2^{2m}\varphi((sc_is)^{2m}), 1:i=1, 2, \cdots, N\}$$
  $$\le \max\{2\|sb_is\|, 2\|sc_is\|: i=1, 2, \cdots, N\}, \{1\}\}^{2m}, a. s., $$ for $i=1, 2, \cdots, l$. Therefore, we can choose   $D>0$ such that, for $m=1, 2, \cdots$,  $$\sup\{tr(D_{n, i}^{2m}): n=1, 2, \cdots, i=1, 2, \cdots, l\}\le D^{2m},a. s. $$
Let $\mathbf{U}_n=\{U_{n,i}:i=1, 2, \cdots, l\}$ be independent unitary random matrices with the Haar law, independent from $\mathbf{D}(n)$. By Theorem 4.5.10 in \cite{AGZ} and the discussion at the end of Section 4.3 in \cite{MS} (Page 110 of \cite{MS}), $$\{U_{n, 1}, U_{n, 1}, \cdots, U_{n, l}, U_{n, l}^*, \widetilde{X}(n)\widetilde{B}(n, i)\widetilde{X}(n),\widetilde{ X}(n)\widetilde{C}(n, i)\widetilde{X}(n), i=1, 2, \cdots, N, I_{n^{2N}}  \}$$ converges in distribution to $\{u_1, u_1^*, \cdots, u_{l}, u_{l}^*, sb_1s, \cdots, sb_Ns, sc_1s, \cdots, sc_Ns, 1\}$, as $n\rightarrow \infty$, $$\{u_1, u_1^*\}, \cdots, \{u_{l}, u_{l}^*\}, \{sb_is, sc_is: i=1, 2, \cdots, N, 1\} $$ are free, and $u_1, u_2, \cdots, u_{l}$ are Haar unitaries. Let $$Y_{n,j}=\sum_{i=1}^kU_{n, i}\widetilde{X}(n)\widetilde{B}(n, j)\widetilde{X}(n)U_{n,i}^*+U_{n, k+1}\widetilde{X}(n)\widetilde{C}(n, j)\widetilde{X}(n)U_{n,k+1}^*,$$ for $j=1, 2, \cdots, N$. Then $\{Y_{n, 1}, \cdots, Y_{n, N} \}$ converges in distribution to $\{y_1, \cdots, y_N\}$, where $$y_j=\sum_{i=1}^ku_isb_jsu_i^*+u_{k+1}sc_jsu_{k+1}^*,$$ for $j=1, 2, \cdots, N$.  We shall show that $\{y_1, \cdots, y_N\}$ has the multidimensional compound free Poisson distribution determined by $\lambda$ and the distribution $\mu_a$ of the tuple $a=\{a_1, a_2, \cdots, a_N\}$.

By the discussion before Theorem 9 in Section 4.3 of \cite{MS}, we have the following result. If $\{u_1, u_1^*\}, \{u_2, u_2^*\}$, and $\{d_1, \cdots, d_m\}$ are free, and $u_1$ and $u_2$ are Haar unitaries, then $$\{u_1d_1u_1^*, \cdots, u_1d_mu_1^*\},\ \{u_2d_1u_2^*, \cdots, u_2d_mu_2^*\}$$ are free. In fact, for polynomials $P_1, Q_1, \cdots, P_r, Q_r$ such that $$\varphi(P_i(u_1d_1u_1^*, \cdots, u_1d_mu_1^*))=\varphi(Q_i(u_2d_1u_2^*, \cdots, u_2d_mu_2^*))=0,$$ for $i=1, 2, \cdots, r$, we have $$\varphi(P_i(u_1d_1u_1^*, \cdots, u_1d_mu_1^*)))=\varphi(u_1(P_i(d_1, \cdots, d_m))u_1^*)=\varphi(P_i(d_1, \cdots, d_m))=0.$$
 Similarly, $\varphi(Q_i(d_1, \cdots, d_m))=0$, for $i=1, 2, \cdots, r$. Therefore,
\begin{align*}
&\varphi(P_1(u_1d_1u_1^*, \cdots, u_1d_mu_1^*)Q_1(u_2d_1u_2^*, \cdots, u_2d_mu_2^*)\\
\cdots &P_r(u_1d_1u_1^*, \cdots, u_1d_mu_1^*)Q_r(u_2d_1u_2^*, \cdots, u_2d_mu_2^*))\\
=&\varphi(u_1P_1(d_1, \cdots, d_m)u_1^*u_2Q_1(d_1, \cdots, d_m)u_2^*\\
\cdots& u_1P_r(d_1, \cdots, d_m)u_1^*u_2Q_r(d_1, \cdots, d_m)u^*_2)=0.
\end{align*}
It implies that $$\{u_1sb_1su_1^*, \cdots, u_1sb_Nsu_1^*\}, \cdots \{u_ksb_1su_k^*, \cdots, u_ksb_Nsu_k^*\}\}, \{u_{k+1}sc_1su_{k+1}^*, \cdots, u_{k+1}sc_Nsu_{k+1}^*\}$$ are free. Then for $m\in \mathbb{N}, \chi:\{1, 2, \cdots, m\}\rightarrow \{1, 2, \cdots, N\}$, we have
\begin{align*}
\kappa_m(y_{\chi(1)},\cdots,  y_{\chi(m)})=&\sum_{i=1}^k\kappa_m(u_isb_{\chi(1)}su_i^*, \cdots, u_isb_{\chi(m)}su_i^*)\\
+&\kappa_m(u_{k+1}sc_{\chi(1)}su_{k+1}^*, \cdots, u_{k+1}sc_{\chi(m)}su_{k+1}^*)\\
=&(k+\delta)\varphi(a_{\chi(1)}\cdots a_{\chi(m)})=\lambda\varphi(a_{\chi(1)}\cdots a_{\chi(m)}).
\end{align*}
\end{proof}

Recall from \cite{DV}, \cite{VDN}, and \cite{PS1} that two elements $a$ and b in non-commutative probability space $(\A, \varphi)$ are classically independent (or, tensor-independent) if $\varphi(a^{p_1}b^{q_1}\cdots a^{p_n}b^{q_n})=\varphi(a^{p})\varphi(b^{q})$, where $p=\sum_{i=1}^np_i$, $q=\sum_{i=1}^nq_i$, $0\le p_1, q_1, ..., p_n, q_n$ are integers. Generally, random variables    $a_1, a_2, \cdots, a_N$ are classically  independent, if for $n\in \mathbb{N}$, $\chi:\{1, 2, \cdots, n\}\rightarrow \{1, 2, \cdots, N\}$, and $\ker\chi$,  the partition of $\{1, 2, \cdots, n\}$ defined by $p\sim_{\ker\chi}q$ if and only if $\chi(p)=\chi(q)$, for $1\le p, q\le n$, we have $$\varphi(a_{\chi(1)}\cdots a_{\chi(n)})=\varphi_{\ker\chi}(a_{\chi(1)}\cdots a_{\chi(n)})=\prod_{V\in \ker\chi}\varphi(a_{\chi(V)}^{|V|}),$$ where $\chi(V)$ is the common value of $\chi$ when restricted to $V$.

\begin{Corollary} Let $a_1,..., a_N$ be self-adjoint random variables in a $C^*$-probability space $(\A, \varphi)$, and $\lambda$ a positive real number. If $a_1,..., a_N $ are  classically independent in $(\A, \varphi)$, then the compound free Poisson distribution $\pi_{\lambda, \mu_a}$ determined by $\lambda$ and the distribution $\mu_a$ of the tuple $a=(a_1,..., a_N)$ has a random matrix model.
\end{Corollary}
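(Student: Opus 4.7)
The plan is to reduce the corollary to Theorem 4.2 by exhibiting an explicit almost sure random matrix model for the tuple $(a_1,\dots,a_N)$ under the hypothesis of classical independence. Once such a model is in hand, Theorem 4.2 immediately furnishes a random matrix model for $\pi_{\lambda,\mu_a}$, and there is nothing further to check.

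To construct the model, I would take a probability space $(\Omega,P)$ carrying a doubly-indexed collection $\{\xi^{(i)}_k : 1\le i\le N,\ 1\le k<\infty\}$ of mutually independent real random variables with $\xi^{(i)}_k$ distributed according to the spectral distribution $\mu_i$ of $a_i$. For each $n$ and $i$, set $D(n,i)$ to be the $n\times n$ diagonal (hence self-adjoint) random matrix with diagonal entries $\xi^{(i)}_1,\dots,\xi^{(i)}_n$. Since each $a_i$ is bounded self-adjoint, $\mu_i$ has compact support, so all moments are finite and everything lives in $L^{\infty-}(\Omega,P)$.

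The key computation is to evaluate the normalized trace on a monomial. For $m\in\mathbb{N}$ and $\chi:\{1,\dots,m\}\to\{1,\dots,N\}$,
\begin{equation*}
\mathrm{tr}_n\bigl(D(n,\chi(1))\cdots D(n,\chi(m))\bigr)=\frac{1}{n}\sum_{k=1}^n \xi^{(\chi(1))}_k\xi^{(\chi(2))}_k\cdots\xi^{(\chi(m))}_k.
\end{equation*}
For fixed $\chi$, the summands are i.i.d.\ (indexed by $k$) with common expectation $E\bigl(\xi^{(\chi(1))}_1\cdots\xi^{(\chi(m))}_1\bigr)$, which by the independence-across-$i$ of the $\xi^{(i)}_1$'s factors as $\prod_{V\in\ker\chi}\int x^{|V|}\,d\mu_{\chi(V)}(x)=\prod_{V\in\ker\chi}\varphi(a_{\chi(V)}^{|V|})$. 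Under the classical independence hypothesis this product equals $\varphi_{\ker\chi}(a_{\chi(1)},\dots,a_{\chi(m)})=\varphi(a_{\chi(1)}\cdots a_{\chi(m)})$. By the strong law of large numbers the normalized trace converges $P$-almost surely to this value; intersecting the countably many full-measure sets indexed by $(m,\chi)$ yields a single full-measure event on which $\{D(n,1,\omega),\dots,D(n,N,\omega)\}$ converges in joint distribution to $(a_1,\dots,a_N)$. Hence $\{D(n,1),\dots,D(n,N)\}$ is an almost sure random matrix model for $(a_1,\dots,a_N)$ in the sense recalled before Theorem 4.2.

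With this model in hand, the hypothesis of Theorem 4.2 is satisfied, and the theorem produces a sequence $\{Y(n,i):1\le i\le N\}$ of random matrices (of some size $\alpha(n)\times\alpha(n)$) converging in distribution to $\pi_{\lambda,\mu_a}$, which is exactly a random matrix model for the compound free Poisson distribution. The main thing to be careful about is the joint almost sure convergence — handled by the countable union of null events above — and the observation that classical independence is precisely the identity that reduces the limiting mixed moment to $\varphi(a_{\chi(1)}\cdots a_{\chi(m)})$; no further argument is needed.
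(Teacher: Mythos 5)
Your proposal is correct, and the overall strategy --- build an almost sure random matrix model for the classically independent tuple and then invoke Theorem 4.2 --- is exactly the reduction the paper performs. The construction of the model, however, is genuinely different. The paper takes \emph{deterministic} diagonal matrices $B(n,i)=\mathbf{Diag}(\xi_1(n,i),\dots,\xi_n(n,i))$ whose entries are quantile-type points in $\sigma(a_i)$ (via Proposition 4.4.9 of \cite{HP}), so that $B(n,i)\to a_i$ in distribution for each $i$ separately, and then embeds $B(n,1),\dots,B(n,N)$ into the tensor product $M_{n^N}(\mathbb{C})=M_n(\mathbb{C})\otimes\cdots\otimes M_n(\mathbb{C})$ so that the matrices are tensorially independent; classical independence of the $a_i$ then forces joint convergence, and almost sure convergence is automatic because nothing is random. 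You instead keep the matrix size at $n\times n$, make the diagonal entries genuinely random and i.i.d.\ according to the spectral distributions $\mu_i$ (independent across $i$), and recover the joint moments $\varphi(a_{\chi(1)}\cdots a_{\chi(m)})=\prod_{V\in\ker\chi}\varphi(a_{\chi(V)}^{|V|})$ via the strong law of large numbers applied to $\frac{1}{n}\sum_{k=1}^n\xi^{(\chi(1))}_k\cdots\xi^{(\chi(m))}_k$, intersecting countably many full-measure events. Both arguments are valid: yours avoids the $n^N$ blow-up in matrix size and is more elementary in that it uses only the SLLN and boundedness of the spectra, while the paper's is cleaner on the measure-theoretic side (no null sets to manage) at the cost of the tensor-product embedding. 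In either case the identity supplied by classical independence is the only place the hypothesis is used, and Theorem 4.2 does the rest.
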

\begin{proof}
By the proof of Proposition 4.4.9 of \cite{HP}, for each $n\in \mathbb{N}$, there are real numbers $\xi_1(n,i)\le \cdots \le \xi_n(n,i)$ in the spectrum $\sigma(a_i)$ of $a_i$ such that the sequence $$\{B(n,i)=\mathbf{Diag}(\xi_1(n, i), \xi_2(n,i), \cdots, \xi_n(n,i)): n=1, 2, \cdots\}\subset (M_{n}(\mathbb{C}), tr)$$ converges in distribution to $a_i$ (see the bottom of Page 169 of the book, also Remark 22.27 in \cite{NS}), for $i=1, 2, \cdots, N$, where $tr$ is the normalized trace on the matrix algebra $M_n(\mathbb{C})$. Place $B(n,1), B(n,2)$, $\cdots, B(n,N)$ into the tensor product matrix algebra $M_{n^N}(\mathbb{C})=M_n(\mathbb{C})\otimes M_n(\mathbb{C})\cdots \otimes M_n(\mathbb{C})$ so that $$B(n,1), B(n,2), \cdots, B(n,N)$$ are mutually tensorial independent in the tensor product algebra. It implies that $$\{B(n,1), B(n,2), \cdots, B(n,N)\}$$ converges in distribution to $\{a_1, \cdots, a_N\}$, as $n\rightarrow \infty$. We have proved that $\{a_1, ..., a_N\}$ has an almost sure random matrix model. Now the conclusion follows from Theorem 4.2.
\end{proof}
Combining Theorem 4.2 and the work in Section 4 of \cite{PS}, we can get the following bi-random matrix model for a kind of bi-free compound free Poisson distributions, which is an illustrative example of the spirit of Section 4 of \cite{PS}: one can generalize results in random matrix models to bi-random matrix models.
\begin{Theorem} Let $\lambda>0$ and $a:=\{a_{1,l}, \cdots, a_{N, l},a_{1,r}, \cdots, a_{M, r} \}$ be a finite sequence of self-adjoint elements in a $C^*$-probability space $(\A, \varphi)$.  If the tuple $a:=\{a_{l,1}, \cdots, a_{l,N},a_{r,1}, \cdots, a_{r,M} \}$ has an almost sure random matrix model, and $a_{i, l}a_{j,r}=a_{j,r}a_{i, l}$ for $i=1, ...., N$ and $j=1, ..., M$,  then there exist a subsequence $\alpha(n)$ of natural numbers, and, for each $n\in \mathbb{N}$,  an $\alpha(n)\times \alpha(n)$ random two-faced family $Z(n):=((Z_{n,  i, l})_{1\le i\le N}, (Z_{n,j, r})_{1\le j\le M})$ of matrices such that $Z(n)$ converges in distribution to the bi-free compound free Poisson distribution determined by  $\lambda$ and $\mu_a$, the distribution of $a$.
\end{Theorem}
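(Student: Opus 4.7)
The plan is to reduce the construction to the free random-matrix model supplied by Theorem 4.2 and to promote it to a bi-matrix family via the left/right embeddings $L$ and $R$ of Section 1.3, exploiting the bipartite commutation $a_{i,l}a_{j,r}=a_{j,r}a_{i,l}$ to translate free cumulants into bi-free cumulants. First I would apply Theorem 4.2 to the single $(N+M)$-tuple $\tilde a=\{a_{1,l},\dots,a_{N,l},a_{1,r},\dots,a_{M,r}\}$ of self-adjoint random variables, which by hypothesis has an almost sure random matrix model. This yields a subsequence $\alpha(n)$ and $\alpha(n)\times\alpha(n)$ random matrices $Y(n,1),\dots,Y(n,N+M)$ with entries in $L^{\infty-}(\Omega,P)$ whose joint distribution converges almost surely to the multidimensional compound free Poisson distribution $P(\lambda,\mu_{\tilde a})$, characterized by the free cumulants $\kappa_m(y_{\chi(1)},\dots,y_{\chi(m)})=\lambda\varphi(a_{\chi(1)}\cdots a_{\chi(m)})$.

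Next I would package these matrices as a two-faced family of bi-matrices in the sense of Definition 1.1: declare $Z_{n,i,l}:=Y(n,i)$ to be a left matrix for $1\le i\le N$ and $Z_{n,j,r}:=Y(n,N+j)$ to be a right matrix for $1\le j\le M$. Fix any pointed vector space $(\X,\X^0,\xi,p)$ with $L^{\infty-}(\Omega,P)\subset \L(\X)$; then the mixed moments of $Z(n)$ are read off from products of $L(Z_{n,i,l})$ and $R(Z_{n,j,r})$ inside $\L(\X_{\alpha(n)})$. Because $L(T)$ acts on the first tensor coordinate of $\X_{\alpha(n)}$ by left multiplication while $R(S)$ acts on the second coordinate by right multiplication, an alternating product $L(T_1)R(S_1)\cdots$ can be permuted, using the canonical reordering $s_\chi$ of Section 1.1, to a single scalar trace $\mathrm{tr}_{\alpha(n)}(Y(n,\chi(s_\chi(1)))\cdots Y(n,\chi(s_\chi(m))))$ of the underlying matrices. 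By the first step this trace converges almost surely to $\lambda\varphi(a_{\chi(s_\chi(1))}\cdots a_{\chi(s_\chi(m))})$, and the bipartite commutation hypothesis lets one rewrite the latter as $\lambda\varphi(a_{\chi(1)}\cdots a_{\chi(m)})$. Performing the bi-non-crossing Möbius inversion given by (1.1)--(1.2) on these limit moments, exactly as in the proof of Theorem 3.3, then yields $\kappa_\chi^{Z(n)}\to\lambda\varphi(a_{\chi(1)}\cdots a_{\chi(m)})$, which is precisely Definition 2.1 applied to $P_{\lambda,\mu_a}$.

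The main obstacle I foresee is the last step: one must carefully track how the permutation $s_\chi$ transports the ordinary non-crossing lattice $NC(m)$ onto the bi-non-crossing lattice $BNC(m,\chi)$ and verify that the left/right commutation in $a$ is exactly what is needed for the Möbius inversion over $BNC(m,\chi)$ to produce the same scalar cumulant value as the free Möbius inversion over $NC(m)$ coming from Theorem 4.2. A secondary technical point is the (harmless) choice of the pointed vector space $\X$ and the verification that the maps $L$ and $R$ send random scalar matrices to bona fide left and right matrices with entries in $L^{\infty-}(\Omega,P)$, together with a possible further passage to a subsequence so that the almost sure convergence holds simultaneously for all fixed words $\chi$.
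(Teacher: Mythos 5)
Your proposal is correct and follows essentially the same route as the paper: invoke Theorem 4.2 for the $(N+M)$-tuple, promote the resulting random matrices to left and right matrices via $L$ and $R$, use the commutation of left and right matrices with entries in the commutative algebra $L^{\infty-}(\Omega,P)$ together with the lattice isomorphism $NC(m)\cong BNC(m,\chi)$ induced by $s_\chi$ to identify the bi-free cumulants with the reordered free cumulants, and finish with the bipartite commutation of $a$ (the paper organizes the middle step as an induction on $|\chi^{-1}(\{r\})|$ rather than your direct reordering, but the content is the same). One small slip in your write-up: the normalized traces of the reordered products converge to the \emph{moments} of the compound free Poisson distribution (sums over $NC(m)$ of products of $\lambda\varphi(\cdots)$), not to $\lambda\varphi(a_{\chi(s_\chi(1))}\cdots a_{\chi(s_\chi(m))})$ itself, which only appears after the M\"obius inversion you carry out in the following step, so the argument still closes.
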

\begin{proof}
By the proof of Theorem 4.2, there exist a subsequence $\tilde{\alpha}(n)$ of natural numbers and a sequence $W_n:=\{W_{n,1}, W_{n, 2}, \cdots, W_{n, N+M}\}_{n=1}^\infty$ of $\tilde{\alpha}(n)\times \tilde{\alpha}(n)$ random matrices such that $$\{W_{n, i}\}_{i=1}^{N+M}\subset (M_{\tilde{\alpha}(n)}(L^{\infty-}(\Omega, P)), E\circ tr)$$ converges in distribution to the multidimensional compound free Poisson distribution $P(\lambda, \mu_a)$, which means  that, for $c_i=a_{l,i}$, for $i=1,..., N$ and $c_{N+i}=a_{r,i}$  $i=1,..., M$, $m\in \mathbb{N}$,  and $\alpha:\{1, 2, \cdots, m\}\rightarrow \{1, 2, \cdots, N+M\}$, we have $$\lim_{n\rightarrow\infty}\kappa_m(W_{n,\alpha(1)}, \cdots, W_{n,\alpha(m)})=\lambda\varphi(c_{\alpha(1)}\cdots c_{\alpha(m)}).$$

Define $X_{n, i, l}=L(W_{n, i}), X_{n, i, r}=R(W_{n,i}),$ for $i=1, 2, \cdots, N+M$.
We need the following result from \cite{PS}.
\vskip 3mm
\textit{ Remark 3.2 in \cite{PS}.  If $[T_{i,j}], [S_{ij}]\in M_N(\L(\X))$ are such that $T_{ij}S_{k,m}=S_{km}T_{ij}$, for all $i,j,k,m$, then it is elementary to verify that $L([T_{ij}])R([S_{ij}])=R([S_{ij}])L([T_{ij}])$}.
\vskip 3mm

By Remark 3.5 in \cite{PS}, for a non-commutative probability space $(\A, \varphi)$, $\A\subset \L(\A)$ by left multiplication.  It implies form Remarks 3.2 and 3.5 in \cite{PS} that$$L([T_{ij}])R([S_{ij}])=R([S_{ij}])L([T_{ij}]),$$ if $[T_{ij}], [S_{ij}]\in M_N(\A)\subset M_N(\L(\A))$, where $\A$ is a commutative algebra. It follows that $$X_{n, i, l}X_{ n, j, r}=X_{ n, j, r}X_{ n, i, l},  i, j=1, 2, \cdots, N+M,\eqno (4.1)$$ since the entries of the matrices $W_{n,i}$, for $i=1, 2, ..., M+N$, are  elements in the commutative algebra $L^{\infty-}(\Omega, P)$.

 Let $m\in \mathbb{N}$, $\chi: \{1, ..., m\}\rightarrow \{l,r\}$, and $\alpha:\{1,..., m\}\rightarrow \{1, ..., N+M\}$. We define a permutation $s\in S_{m}$ by the following equations $$\chi^{-1}(\{l\})=\{s(1)<\cdots<s(m-k)\},  \chi^{-1}(\{r\})=\{s(m-k+1)<\cdots <s(m)\}.$$
We prove $$\lim_{n\rightarrow \infty}\kappa_{\chi}(X_{n, \alpha(1),\chi(1)}, \cdots, X_{n, \alpha(m), \chi(m)})=\lambda\varphi(c_{\alpha(s(1))}\cdots c_{\alpha(s(m))}).\eqno (4.2)$$
 Suppose  $|\chi^{-1}(\{r\})|=0$, that is,  $\chi(i)=l$, for all $i=1, 2, \cdots, m$. In this case, $s$ is the identity permutation of $S_m$, that is, $s(1)=1, ..., s(m)=m$.
By Lemma 3.7 in \cite{PS}, in this case, the distribution of $\{X_{n, \alpha(1),\chi(1)}, ..., X_{n,\alpha(m),\chi(m)}\}$ in $(\L(M_{\alpha(n)}(L^{\infty-}(\Omega, P))), \Phi_n:=tr\circ E_{\L(M_{\alpha(n)}(L^{\infty-}(\Omega, P)))})$ is equal to  that of $\{W_{n, \alpha(1)}, \cdots, W_{n, \alpha(m)}\}$ in $(M_{\alpha(n)}(L^{\infty-}(\Omega, P)), E\circ tr)$. It implies from the discussion in first paragraph of this proof that
 $$\lim_{n\rightarrow \infty}\kappa_{\chi}(X_{n, \alpha(1),\chi(1)}, \cdots, X_{n, \alpha(m),\chi(m)})=\lim_{n\rightarrow \infty}\kappa_m(W_{n, \alpha(1)}, \cdots, X_{n, \alpha(m)})=\lambda\varphi(c_{\alpha(1)}\cdots c_{\alpha(m)}).$$

Suppose (4.2) holds true when $|\chi^{-1}(\{r\})|=k$. Now we prove $(4.2)$ when $|\chi^{-1}(\{r\})|=k+1$. We adopt some ideas from Example 4.15 in \cite{PS}.
Let $\hat{\chi}=\chi\circ s$, where $s$ is the function defined before $(4.2)$.
Then $\hat{\chi}^{-1}(\{l\})=\{1, 2, \cdots, m-k-1\}$ and $\hat{\chi}^{-1}(\{r\})=\{m-k, \cdots, m\}$. Define a lattice isomorphism $\rho:BNC(\chi)\rightarrow BNC(\hat{\chi}), \rho:\pi\mapsto s^{-1}\circ \pi$. Then $\mu_{BNC}(\pi, 1_\chi)=\mu_{BNC}(s^{-1}\circ \pi, 1_{\hat{\chi}})$.
  It follows by $(4.1)$ that $$\Phi_n(X_{n, \alpha(1),\chi(1)}, ..., X_{n,\alpha(m),\chi(m)})=\Phi_{n}(X_{n,\alpha(s(1)), \hat{\chi}(1)}, ...,X_{n, \alpha(s(m)), \hat{\chi}(m)}).\eqno (4.3)$$ For a partition $V\in \pi\in BNC(\chi)$, we have $ s^{-1}\circ V\in s^{-1}\circ \pi\in BNC(\hat{\chi})$. Without loss of generality, we can assume that $V=\{p_1\prec_{\chi}\cdots\prec_{\chi}p_q\}$ is a $\chi$-ordered interval of the $\chi$-ordered set $$\{s(1)\prec_{\chi}\cdots \prec_{\chi}s(m-k-1)\prec_{\chi}s(m)\prec_{\chi}\cdots \prec_{\chi}s(m-k)\}.$$ Then $s^{-1}\circ V=\{s^{-1}(p_1)\prec_{\hat{\chi}}\cdots \prec_{\hat{\chi}}s^{-1}(p_q)\}$ is a $\hat{\chi}$-ordered interval of the $\hat{\chi}$-ordered set $$\{1\prec_{\hat{\chi}} 2\prec_{\hat{\chi}} \cdots \prec_{\hat{\chi}}m-k-1\prec_{\hat{\chi}} m \prec_{\hat{\chi}} \cdots \prec_{\hat{\chi}}m-k\}.$$  By $(4.3)$, we get
  $$\Phi_V(X_{n, \alpha(1),\chi(1)}, ..., X_{n,\alpha(m),\chi(m)})=\Phi_{s^{-1}\circ V}(X_{n,\alpha(s(1)), \hat{\chi}(1)}, ...,X_{n, \alpha(s(m)), \hat{\chi}(m)}).$$ It implies that
$$\Phi_\pi(X_{n, \alpha(1), \chi(1)}, ..., X_{n, \alpha(m), \chi(m)})=\Phi_{s^{-1}\circ \pi}(X_{n,  \alpha(s(1)),\hat{\chi}(1)}, ..., X_{n, \alpha(s(m)), \hat{\chi}(m)}),$$ for $\pi\in BNC(\chi)$. We thus have $$\kappa_{\chi}(X_{n, \alpha(1), \chi(1)}, ..., X_{n, \alpha(m), \chi(m)})
= \sum_{\pi\in BNC(\chi)}\Phi_{n,\pi}(X_{n, \alpha(1), \chi(1)},..., X_{n, \alpha(m), \chi(m)})\mu_{BNC}(\pi, 1_\chi)$$
$$=\sum_{\pi\in BNC(\hat{\chi})}\Phi_{n, \pi}(X_{n, \alpha(s(1)),\hat{\chi}(1)},..., X_{n, \alpha(s(m)), \hat{\chi}(m)})\mu_{BNC}(\pi, 1_{\hat{\chi}}). \eqno (4.4)$$

We then define $\tilde{\chi}:\{1, 2, \cdots, m\}\rightarrow \{l,r\}$ by $\tilde{\chi}(i)=\hat{\chi}(i)$, if $i<m$; $\tilde{\chi}(m)=l$. Replacing $\hat{\chi}$ by $\tilde{\chi}$ induces an isomorphism from $BNC(\hat{\chi})$ to $BNC(\widetilde{\chi})$. It follows from the fact $X_{n, p, r}I_{\alpha(n)}=X_{n, p, l}I_{\alpha(n)}$ for $p=1, 2,..., N+M $ and $n\in \mathbb{N}$, and $(4.4)$ that
$$\kappa_{\chi}(X_{n, \alpha(1), \chi(1)}, ..., X_{n, \alpha(m), \chi(m)})=\sum_{\pi\in BNC(\tilde{\chi})}\Phi_{n, \pi}(X_{n, \alpha(s(1)), \tilde{\chi}(1)},..., X_{n, \alpha(s(m)),\tilde{\chi}(m)})\mu_{BNC}(\pi, 1_{\tilde{\chi}}). \eqno(4.5)$$
By the inductive hypothesis and  $(4.5)$, we have

$$\lim_{n\rightarrow \infty}\kappa_{\chi}(X_{n, \alpha(1), \chi(1)}, ..., X_{n, \alpha(m), \chi(m)})
=\lim_{n\rightarrow \infty}\kappa_{\tilde{\chi}}(X_{n, \alpha(s(1)), \tilde{\chi}(1)}, ..., X_{n, \alpha(s(m)), \tilde{\chi}(m)})$$
$$=\lambda\varphi(c_{\alpha(s(1))}\cdots c_{\alpha(s(m))}),
$$
where the last equality holds true because $|\tilde{\chi}^{-1}(\{r\})|=|\hat{\chi}(\{r\})|-1=k$. We have proved $(4.2)$.

For  $m\in \mathbb{N}$,   $\chi:\{1, ..., m\}\rightarrow \{l,r\}$, and  function $\alpha:\{1, ..., m\}\rightarrow \{1, ..., N+M\}$ such that $1\le \alpha(i)\le N$ if $\chi(i)=l$, and $N<\alpha(i)\le N+M$ if $\chi(i)=r$, for $i=1, ..., m$, Then $c_{\alpha(i)}=a_{l,\alpha(i)}$ if $\alpha(i)\le N$; $c_{\alpha(i)}=a_{r, \alpha(i)-N}$, if $\alpha(i)>N$, for $i=1, ..., m$. It follows that $$\varphi(c_{\alpha(1)}\cdots c_{\alpha(m)})=\varphi(c_{\alpha(s(1))}\cdots c_{\alpha(s(m))}),\eqno (4.6)$$ because the left random variables commute with the right random variables in the family $$a=((a_{l, i})_{1\le i\le N}, (a_{r, j})_{1\le j\le M}).$$ By $(4.2)$ and $(4.6)$, we have
$$\lim_{n\rightarrow \infty}\kappa_{\chi}(X_{n, \alpha(1),\chi(1)}, \cdots, X_{n, \alpha(m), \chi(m)})
=\lambda \varphi(c_{\alpha(1)}\cdots c_{\alpha(m)}),$$
which shows that  $$\{(X_{ n, i, l})_{1\le i\le N}, (X_{ n, i, r})_{N+1\le i\le N+M}\} \subset (\L(M_{\alpha(n)}(L^{\infty-}(\Omega, P))), \Phi_n:=tr\circ E_{\L(M_{\alpha(n)}(L^{\infty-}(\Omega, P)))})$$ converges in distribution to the compound bi-free  Poisson distribution determined by $\lambda$ and the distribution of $\mu_a$.

Let $Z_{n,i,l}=X_{n, i, l}$ for $i=1,..., n$ and $Z_{n, i, r}=X_{n, N+i, r}$ for $i=1,..., M$. Then the random two faced family $Z_n:=((Z_{n, i,l})_{1\le i\le N}, (Z_{n, i,r})_{1\le i\le M})$ of matrices converges in distribution to the compound bi-free Poisson distribution determined by $\lambda$ and the distribution $\mu_a$.

\end{proof}
\section{Bi-Matrix Models With Fock Space Entries }

This section is devoted to constructing a bi-matrix model for a compound bi-free  Poisson distribution determined by a positive number and a commutative pair of random variables, where the bi-matrix model consists of  matrices with entries of creation and annihilation operators on the full Fock space of a Hilbert space. This bi-matrix model is an analogue of P. Skoufranis' similar bi-matrix models for bi-free central limit distributions in \cite{PS}.

\begin{Theorem} Let  $\lambda>0$, $\{a_1, a_2\}$ be a commutative pair of  random variables in a non-commutative space $(\A, \varphi)$.  Then for $n\in N$, there is  a sequence $\{(Z_{n, N, l}, Z_{n,N, r}): N=1, 2, \cdots\}$ of $n\times n$ left and right matrices $Z_{n, N, l}$ and $Z_{n, N, r}$, respectively, with entries of creation and annihilation operators on a full fuck space such that $Z_{n,N}=(Z_{n,N, l}, Z_{n, N, r})$ converges in distribution to the compound bi-free Poisson distribution determined by $\lambda$ and $\mu_{a_1, a_2}$, the distribution of the pair $(a_1, a_2)$, as $N\rightarrow \infty$. That is,   $$\lim_{N\rightarrow \infty}\kappa_\chi(Z_{n, N})=\lambda\varphi(a_{\chi(1)}\cdots a_{\chi(m)})=\lambda\varphi(a_1^{|\chi^{-1}(\{l\})|}a_2^{|\chi^{-1}(\{r\})|}), $$ for $\chi:\{1, 2, \cdots, m\}\rightarrow \{1,2\}$ and $m\ge 1$.
\end{Theorem}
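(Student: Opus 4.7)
My plan is to extend Skoufranis's Fock-space bi-matrix construction for bi-free central limit distributions (Section~5 of [PS]) to the compound bi-free Poisson setting, using the Mastnak--Nica operator model and guided by Theorem~2.3 of this paper. That theorem realizes the compound bi-free Poisson distribution as the bi-free Poisson limit of $N$ copies of $(a_1\otimes p_N,a_2\otimes p_N)$ with $\varphi(p_N)=\lambda/N$, so the Fock-space construction should mirror this limit at the level of matrix entries.

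First I would fix the matrix size $n$, and for each $N$ take a full Fock space $\mathcal{F}(\mathcal{H}_N)$ over $\mathcal{H}_N=\bigoplus_{m=1}^{N}\mathcal{H}_m$, with each orthogonal summand $\mathcal{H}_m$ carrying designated left and right unit vectors $(\xi_{m,l},\xi_{m,r})$ that encode one instance of the compound building block. Following Mastnak--Nica, for each $m$ I would attach a pair $(X_{m,l},X_{m,r})$ built from creation, annihilation, and gauge (preservation) operators, with the gauge-operator data chosen to encode the mixed moments $\varphi(a_1^{p}a_2^{q})$ and scaled by $1/N$ so that each single copy contributes a bi-free cumulant of order $\lambda/N$. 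Then, following the pattern of Section~5 of [PS], I would define the $n\times n$ left and right matrices $Z_{n,N,l}$ and $Z_{n,N,r}$ by placing $\sum_{m=1}^{N}X_{m,l}$ and $\sum_{m=1}^{N}X_{m,r}$ at designated matrix positions and zeros elsewhere; commutativity of $(a_1,a_2)$ would be used here to ensure that the corresponding matrix operators commute in the sense of Remark~3.2 of [PS], so that $(Z_{n,N,l},Z_{n,N,r})$ is a genuine left/right matrix pair in $\L(M_n(\mathcal{F}(\mathcal{H}_N)))$.

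Next I would compute the bi-free cumulants. By the bi-matrix reduction of Section~1.3, $\kappa_\chi(Z_{n,N})$ reduces to the scalar bi-free cumulant of $(\sum_{m}X_{m,l},\sum_{m}X_{m,r})$ on the Fock space, up to a fixed combinatorial factor coming from the matrix placement. Since the $N$ summands live on orthogonal subspaces of $\mathcal{H}_N$, they are mutually bi-free, so all mixed cumulants vanish by (1.3) and $\kappa_\chi=N\cdot\kappa_\chi(X_{1,\chi(1)},\dots,X_{1,\chi(m)})$. By the standard Fock-space cumulant calculation (the bi-free analogue of Example~12.19 of [NS]), each single-copy cumulant equals $\tfrac{\lambda}{N}\varphi(a_1^{|\chi^{-1}(\{l\})|}a_2^{|\chi^{-1}(\{r\})|})$, so summation yields exactly $\lambda\varphi(a_1^{|\chi^{-1}(\{l\})|}a_2^{|\chi^{-1}(\{r\})|})$ in the limit $N\to\infty$.

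The principal obstacle will be designing the gauge operators so that each single-copy bi-free cumulant precisely reproduces $\tfrac{\lambda}{N}\varphi(a_1^{p}a_2^{q})$, with no extraneous terms from higher-order gauge interactions, and so that the joint bi-matrix cumulant computation cleanly isolates the Poisson factor $\lambda$ in the limit. This is a non-trivial extension of Skoufranis's central-limit construction (where only the second-order cumulant matters) to the full compound setting, and it is precisely here that the commutativity hypothesis $[a_1,a_2]=0$ is needed: it allows a single Fock space to carry the left and right gauge data consistently and it is what guarantees that the left and right matrix entries commute as required by Remark~3.2 of [PS].
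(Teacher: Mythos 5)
Your overall strategy is genuinely different from the paper's, and the difference lands you on the hard side of the problem. The paper does not build $N$ bi-free copies on orthogonal summands and does not use gauge (preservation) operators at all. Instead it writes down the canonical Mastnak--Nica operators directly: it sets
$$W_{N,i,l}=l_i^*+\sum_{k=0}^{N}\ \sum_{\alpha:\{1,\dots,k\}\to\{1,2\}}\lambda\,\varphi(a_{\alpha(1)}\cdots a_{\alpha(k)}a_i)\,l_{\alpha(k)}\cdots l_{\alpha(1)},$$
and similarly on the right with the $r$'s, so that by Theorem~6.2 of [MN] the $(l,r)$-cumulants of the pair $(W_{N,1,l},W_{N,2,r})$ are \emph{exactly} the prescribed values $\lambda\varphi(a_1^{p}a_2^{q})$ for orders $m\le N$ and $0$ for $m>N$; here $N$ is a truncation parameter for the series, not a number of bi-free copies. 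The passage to $n\times n$ matrices is then immediate from Theorem~5.1 of [PS], which says that the left and right matrices $L([l(e_{i,j}^k)])$ and $R([r(e_{i,j}^k)])$ built from creation operators reproduce, under $tr\circ E$, the joint distribution of the scalar creation and annihilation operators; one simply substitutes these matrices into the same polynomial and lets $N\to\infty$ to remove the truncation.

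The gap in your proposal is precisely the step you yourself flag as ``the principal obstacle'': you never construct the single-copy pair $(X_{m,l},X_{m,r})$ whose bi-free $(l,r)$-cumulants of \emph{every} order and every shuffle $\chi$ equal $\tfrac{\lambda}{N}\varphi\bigl(a_1^{|\chi^{-1}(\{l\})|}a_2^{|\chi^{-1}(\{r\})|}\bigr)$. That construction is the entire content of the theorem --- once one has any operator pair with the right cumulants, bi-freeness of copies living on orthogonal summands and the bi-matrix reduction are routine --- and designing left and right gauge operators whose mixed left-right cumulants close up with no extraneous higher-order terms is not a formality; it is essentially the operator-model half of the Levy-Hincin realization problem treated in [GHM] and [MN]. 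There is also a small logical wrinkle: if each copy had cumulants exactly $\tfrac{\lambda}{N}\varphi(\cdots)$, the sum would already equal $\lambda\varphi(\cdots)$ for every finite $N$ and no limit would be needed, so the limit in $N$ in your scheme must come from an error term you have not identified, whereas in the paper it comes transparently from removing the series truncation. The fix is to replace the gauge-operator ansatz by the Mastnak--Nica canonical form above, which hands you the prescribed cumulants for free and reduces the whole proof to quoting Theorem~6.2 of [MN] and Theorem~5.1 of [PS].
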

\begin{proof}
Let $\X=\F(\H)$ be the full Fock space of an infinite dimensional complex Hilbert space $\H$. Let $\X^0=\bigoplus_{n\ge 1}\H^{\otimes n}$,then $\X=\mathbb{C}\Omega\oplus \X^0$. Let  $p:\X\rightarrow \mathbb{C},\  p (\lambda\Omega+x_0)=\lambda$. Then  $(\X, \X^0, \Omega, p)$ is a pointed vector space. Define a unital linear functional $\omega: \L(\X)\rightarrow \mathbb{C}, \omega(T)=\langle T\Omega, \Omega\rangle$, for $T\in \L(\X)$.
Let $\{e_1, e_2\}$ be an orthonormal set in $\H$. Let $l_i$ and $r_i$ be the left and right creation operators associated with $e_i$, respectively, for $i=1, 2$.

Let $$W_{N,,i,l}=l^*_i+\sum_{n=0}^N \sum_{\alpha:\{1, 2, \cdots, n\}\rightarrow \{1, 2\}}\lambda \varphi(a_{\alpha(1)}\cdots a_{\alpha(n)}a_i)l_{\alpha(n)}\cdots l_{\alpha(1)}, $$ $$W_{N, i, r}=r_i^*+\sum_{n=0}^N \sum_{\alpha:\{1, 2, \cdots, n\}\rightarrow \{1,2\}}\lambda \varphi(a_{\alpha(1)}\cdots, a_{\alpha(n)}a_i)r_{\alpha(n)}\cdots r_{\alpha(n)},$$ for $i=1, 2$, and $N\in \mathbb{N}$. For $m\in \mathbb{N}$, and $\chi:\{1, 2, \cdots, m\}\rightarrow \{l, r\}$, we define $$ W_{N, l}=W_{N, 1,l}, W_{N, r}=W_{N, 2, r}.$$ Then by Theorem 6.2  in \cite{MN}, we have
$$\kappa_\chi(W_{N,\chi(1)}, \cdots, W_{N, \chi(m)})=\left\{\begin{array}{ll}\lambda \varphi(a_1^{|\chi^{-1}(\{l\})|}a_2^{|\chi^{-1}(\{r\})|}),&\text{if } m\le N,\\
0, & \text{if } m>N.\end{array} \right.$$

 For $n\in \mathbb{N}$,  let  $\{e_{i,j}^k: i, j=1, 2, \cdots, n, k=1, 2\}$ be an orthonormal set of $\H$, and $$L_{k}=\frac{1}{\sqrt{N}}L([l(e_{i,j}^k)]_{n\times n}), L_k^*=\frac{1}{\sqrt{N}}L([l^*(e_{j,i}^k)]_{n\times n}), $$ $$R_k=\frac{1}{\sqrt{N}}R([r(e_{i,j}^k)]_{n\times n}), R_k^*=\frac{1}{\sqrt{N}}L([r^*(e_{j,i}^k)]_{n\times n}), k=1, 2, $$
 where $L(A)$ and $R(A)$ are left and right matrices associated with matrix $A$, respectively, and $l(e_{i,j}^k)$ and $r(e_{i,j}^k)$ are left and right creation operators on $\X$ associated with vector $e_{i,j}^k$. By Theorem 5.1 in \cite{PS}, the joint distribution of $\{L_k, L_k^*, R_k, R_k^*: k=1, 2\}$ with respect to $\Phi:=tr\circ E_{\L(\X_n)}$ is equal to the joint distribution  of $\{l_k, l_k^*, r_k, r_k^*: k=1, 2\}$ with respect to $\omega$. Let $$Z_{n, N, l}=L_1^*+\sum_{i=0}^N\sum_{\alpha:\{1, 2, \cdots, i\}\rightarrow \{1, 2\}}\lambda \varphi(a_{\alpha(1)}\cdots a_{\alpha(i)}a_1)L_{\alpha(i)}\cdots L_{\alpha(1)},$$
  $$Z_{n, N, r}=R_2^*+\sum_{i=0}^N\sum_{\alpha:\{1, 2, \cdots, i\}\rightarrow \{1,2\}}\lambda\varphi(a_{\alpha(1)}\cdots a_{\alpha(i)}a_2)R_{\alpha(i)}\cdots R_{\alpha(1)}.$$ We then have \begin{align*}\kappa_\chi(Z_{n, N, \chi(1)}, \cdots, Z_{n, N, \chi(m)})=&\kappa_\chi(W_{N, \chi(1)}, \cdots, W_{N, \chi(m)})\\
  =&\left\{\begin{array}{ll}\lambda\varphi(a_1^{|\chi^{-1}(\{l\})|}a_2^{|\chi^{-1}(\{r\})|}) ,&\text{if } m\le N,\\
0, & \text{if } m>N.\end{array} \right.
\end{align*} It implies that $$\lim_{N\rightarrow \infty}\kappa_\chi(Z_{n, N, \chi(1)}, \cdots, Z_{n, N, \chi(m)})=\lambda \varphi(a_1^{|\chi^{-1}(\{l\})|}a_2^{|\chi^{-1}(\{r\})|}).$$
\end{proof}

\end{document}